\documentclass[10pt]{amsart}

\textheight=225mm \textwidth =155mm \headsep=10mm \topmargin=-5mm
\oddsidemargin=0cm \evensidemargin=0mm
\newfont{\titre}{cmbx12 at 16pt}
\bibliographystyle{plain}

\newtheorem{lem}{Lemma}[section]
\newtheorem{thm}{Theorem}[section]
\newtheorem{prop}{Proposition}[section]

\newtheorem{rem}{Remark}[section]

\date{}

\begin{document}

\title[Bilinear stabilization]{Feedback stabilization  for a bilinear control system under weak observability inequalities}


\author{K. Ammari}
\address{UR Analysis and Control of PDE's, UR 13E64, Department of Mathematics,
Faculty of Sciences of Monastir, University of Monastir, 5019 Monastir, Tunisia}
\email{kais.ammari@fsm.rnu.tn}
\author{M. Ouzahra}
\address{Department of  mathematics \& informatics, ENS. Univesity of Sidi Mohamed Ben Abdellah. Fes, Morocco}
\email{mohamed.ouzahra@usmba.ac.ma}

\begin{abstract}
In this paper, we discuss the feedback stabilization of  bilinear systems under  weak observation properties. In this case, the uniform stability is not guaranteed. Thus we provide  an explicit weak decay rate for all regular initial data. Applications to Schr\"odinger and wave equations are provided.
\end{abstract}

\subjclass[2010]{93D15}
\keywords{Distributed bilinear systems, feedback stabilization, decay estimate}

\maketitle

\tableofcontents

\section{Introduction}

Most of control systems which are used to describe processes in physics, engineering, economics.. are generally nonlinear and relatively complex, which makes the identification of mathematical models extremely   difficult. Consequently, the first investigations of different concepts in control theory were confined mainly  to  simple models, namely linear difference equations and linear ordinary differential equations (see \cite{bal2,won79,zab95}). Then there has been several work on generalizing the well known systems theory concepts to systems described by  partial differential equations, where the adequate state space is an infinite dimensional functional space (see \cite{bal81,bal82,cur78,cur95,sei01,sle89,zab95}). In recent years, bilinear systems  have been widely used in the modeling of various dynamical systems, since many real physical processes may be appropriately modeled as bilinear systems when linear models are inadequate. Also,
bilinear systems  provide a better approximation to a nonlinear system than linear ones \cite{kha00,li01,moh73,moh80}.

In this paper, we are concerned with the question of feedback stabilization of the following homogeneous bilinear system:
\begin{equation}\label{SS}
y^\prime (t) = Ay(t) + p(t)By(t),\ y(0)=y_{0},
\end{equation}
where the state space is an Hilbert  $H$ with inner product
$\langle\cdot,\cdot\rangle$ and corresponding norm $\|.\|$, the
dynamic $A$ is an unbounded operator with domain $D(A)\subset
H$ and generates a semigroup of contractions $S(t)$ on $H$, $B$ is
a linear bounded operator from $H$ into $H$ and $p(.)$ is a scalar function and represents the control.

\medskip

In order to construct a stabilizing control $p(t)$ for the system
(\ref{SS}), a natural approach is to formally compute the time rate
of change of the energy $ E(t):=\displaystyle\frac{1}{2}||y(t)||^2$,
obtaining thus
$$\displaystyle E^\prime (t) \le {\mathcal R}e
\left(p(t)\langle By(t),y(t)\rangle\right)\cdot$$
Thus, in order to make the energy nonincreasing, one may consider feedback controls
 $  p(t) = f(y(t))$ such that the following "dissipating energy inequality" holds
 $$ {\mathcal R}e \{ f(y(t))\langle By(t),y(t)\rangle \}
 < 0, \; \forall t\ge 0\cdot
$$
As a class of feedback controls that satisfy the last inequality, one can consider  the following family of controls : $$p_r(t)=-\displaystyle\frac{\langle y(t),By(t)\rangle}{\|y(t)\|^r} {\bf 1}_{\{t\ge 0 : y(t)\ne 0\}},\; r\in \mathbb{R} \cdot$$
The case  $r=0$ has been considered in many works \cite{bal,ber,bou,ouz2}. In \cite{bal}, a weak stabilization result has been established using the control $p_0(t)$ provided that the following assumption is verified:
\begin{equation}\label{obsw1}
\langle BS(t)y,S(t)y\rangle = 0, \; \forall t\ge 0 \Longrightarrow y = 0\cdot
\end{equation}
Under the assumption
\begin{equation}\label{obss1}
\exists \, T, \delta >0; \;  \; \displaystyle\int_0^T|\langle BS(t)y,S(t)y\rangle| dt \ge \delta \|y\|^2,
\;\forall \;y\in H,
\end{equation}
a strong stabilization result has been obtained using the control
$p_0(t)$, and the following estimate   (see \cite{ber,ouz2}) was given:
$$\|y(t)\| =O\left(\displaystyle\frac{1}{\sqrt t}\right)\cdot$$
  In addition, others polynomial estimates was provided in \cite{ouz-al12} using the control $p_r(t)$ with $r<2.$ The case $r=2$ has been
considered  in \cite{ouz1}, where
exponential stabilization results have been established using
$p_2(t)$ under the observation assumption  (\ref{obss1}).

Note that the inequality (\ref{obss1}) is necessary for uniform stabilization of conservative systems, so we can not expect such a degree of stability under  a weaker observability assumption. Accordingly, we will look for weak stabilization when dealing with weak observation assumptions.
 Such a question  was investigated in the  case of unbounded linear feedback control (see \cite{Kai-Nec, KM1}) and bounded nonlinear feedback (see \cite{FK} and references therein).

\medskip

In this paper, we study the weak stabilization of the bilinear system (\ref{SS}) under  weaker observation assumptions than (\ref{obss1}) using the  controls $p_r(t),\;r=0, 2.$ In the next section we first present  our stabilization results  under observation inequalities that extend the classical one (\ref{obss1}), and we provide the asymptotic estimate of the resulting state.  Then we give a stabilization result under a null-controllability like assumption.
In the third section we present  applications  to Schr\"odinger and wave  equations.

\section{Stabilization results and decay estimates}

For a fixed $\theta\in (0,1),$ we consider a couple $(K,L)$ of Banach spaces (see \cite{trieb}) such that:

\medskip

$\bullet$ $H\subset L,\; D(A)\subset K \subset H,$

$\bullet$ for all $y\in D(A); \; \|y\|_{D(A)}  \sim \|y\|_K,$

$\bullet$ the following interpolation holds: $[K,L]_\theta= H$ with;
\begin{equation}\label{interpolation}
\|y\|\le \|y\|_{L}^\theta  \|y\|_{K}^{1-\theta},\; \forall y\in D(A)\cdot
\end{equation}

\medskip

Let ${\mathcal H}: \mathbb{R}_+ \rightarrow \mathbb{R}_+$ be a continuous and increasing function on $\mathbb{R}_+$.

\subsection{Preliminary}

Let us  recall the following technical lemmas which gives useful  estimates for the problem of
stabilization.

\begin{lem} \label{lem1} (\cite{Kai-Nec}, p. 26 and \cite{KM1,KM2}).
 Let $(a_k)$ be a sequence of positive real numbers satisfying
$$
a_{k+1}\le a_k  - C a_{k+1}^{\alpha+2},\; \forall k\ge 0,
$$
where $C > 0$ and  $\alpha>-1$ are constants. Then there exists a positive constant $M$
such that:
$$
a_k \le \frac{M}{(k+1)^{\frac{1}{\alpha+1}}},\; \forall k\ge 0.
$$

\end{lem}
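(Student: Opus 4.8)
The plan is to convert the nonlinear difference inequality into a linear growth statement for the auxiliary sequence $b_k := a_k^{-(\alpha+1)}$, which, once established, yields the claimed decay simply by inversion. Write $\beta := \alpha+1 > 0$. First I would observe that the hypothesis forces monotonicity: since $C a_{k+1}^{\alpha+2} \ge 0$, the inequality $a_{k+1} \le a_k - C a_{k+1}^{\alpha+2}$ gives $a_{k+1} \le a_k$, so $(a_k)$ is nonincreasing and in particular $0 < a_k \le a_0$ for all $k$. This boundedness will be the crucial ingredient.

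The heart of the argument is to show that the increments of $b_k$ are bounded below by a fixed positive constant, namely $b_{k+1} - b_k \ge \gamma$ for some $\gamma > 0$ independent of $k$. To this end I would rewrite the hypothesis as $a_k \ge a_{k+1}(1 + C a_{k+1}^{\beta})$, raise both sides to the power $-\beta$, and subtract, obtaining
$$b_{k+1} - b_k = a_{k+1}^{-\beta} - a_k^{-\beta} \ge a_{k+1}^{-\beta}\bigl(1 - (1 + C a_{k+1}^{\beta})^{-\beta}\bigr).$$
Setting $u := C a_{k+1}^{\beta}$, the right-hand side equals $C\,\varphi(u)$ with $\varphi(u) := u^{-1}\bigl(1 - (1+u)^{-\beta}\bigr)$. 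The key structural point is that $\varphi$ is positive and nonincreasing on $(0,\infty)$, being the difference quotient at the origin of the concave increasing map $u \mapsto 1 - (1+u)^{-\beta}$, with limiting value $\varphi(0^+) = \beta$.

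The main obstacle is that $\varphi(u) \to 0$ as $u \to \infty$, so no lower bound can hold uniformly over all $u > 0$; this is precisely where the boundedness established above is used. Since $a_{k+1} \le a_0$, the argument $u = C a_{k+1}^{\beta}$ stays in the bounded range $(0, C a_0^{\beta}]$, on which the nonincreasing function $\varphi$ attains its infimum at the right endpoint. Hence
$$b_{k+1} - b_k \ge C\,\varphi(C a_0^{\beta}) = \frac{1 - (1 + C a_0^{\beta})^{-\beta}}{a_0^{\beta}} =: \gamma > 0.$$

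Finally I would telescope: summing the increment bound from $0$ to $k-1$ gives $b_k \ge b_0 + \gamma k = a_0^{-\beta} + \gamma k$, and since $a_0^{-\beta} + \gamma k \ge \min(a_0^{-\beta}, \gamma)\,(k+1)$, inverting the relation between $a_k$ and $b_k$ yields $a_k = b_k^{-1/\beta} \le M (k+1)^{-1/(\alpha+1)}$ with $M := \min(a_0^{-\beta}, \gamma)^{-1/(\alpha+1)}$, which is the asserted estimate. The only delicate point throughout is the uniform positivity of $\gamma$, and as indicated it rests entirely on the a priori bound $a_k \le a_0$ coming from monotonicity.
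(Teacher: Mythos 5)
Your proof is correct, and it is worth noting that the paper itself gives no proof of this lemma at all: it is quoted verbatim from the cited references (\cite{Kai-Nec}, p.~26, and \cite{KM1,KM2}), so there is no in-paper argument to compare against. Your argument is sound at every step: the monotonicity $a_{k+1}\le a_k$ and hence $a_k\le a_0$ is immediate; the rewriting $a_k\ge a_{k+1}(1+Ca_{k+1}^{\beta})$ with $\beta=\alpha+1$ and the passage to $b_k=a_k^{-\beta}$ correctly yields $b_{k+1}-b_k\ge C\varphi(u)$ with $u=Ca_{k+1}^{\beta}$ and $\varphi(u)=u^{-1}\bigl(1-(1+u)^{-\beta}\bigr)$; the monotonicity of $\varphi$ does follow from concavity of $u\mapsto 1-(1+u)^{-\beta}$ vanishing at $0$ (or directly from $\varphi(u)=\int_0^1\beta(1+su)^{-\beta-1}\,ds$); and the restriction $u\le Ca_0^{\beta}$ is exactly what makes $\gamma:=C\varphi(Ca_0^{\beta})>0$ uniform in $k$. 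The telescoping and the elementary bound $a_0^{-\beta}+\gamma k\ge \min(a_0^{-\beta},\gamma)(k+1)$ then give the stated estimate with $M=\min(a_0^{-\beta},\gamma)^{-1/(\alpha+1)}$. This substitution $b_k=a_k^{-(\alpha+1)}$ is essentially the standard device used in the cited sources (which compare the discrete inequality with the ODE $y'=-Cy^{\alpha+2}$); your version is self-contained and correctly isolates the one delicate point, namely that the a priori bound $a_k\le a_0$ is what keeps the increment of $b_k$ bounded away from zero. The only (harmless) observation to add is that your $M$ depends on $a_0$, $C$ and $\alpha$, which is consistent with the statement and with how the lemma is invoked in the paper.
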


\begin{lem}\label{lem2}\cite{ouz-al12}.
Let $A$ generate a semigroup of contractions $S(t)$ and let $B : H \to H$ be linear and bounded.
 Then  for all $r\in ]-\infty,2]$,  the system (\ref{SS}) controlled with $p_r$ possesses a unique global mild solution, which satisfies the following decay estimate:

\begin{equation}\label{Obis}
\displaystyle\int_{0}^{T}|\langle BS(s)y(t),S(s)y(t)\rangle|ds\leq \bigg(2T^{\frac{3}{2}}\|B\|^{2}\|y_0\|^{2-r}+T^{\frac{1}{2}}\bigg)\|y(t)\|^{\frac{r}{2}}\bigg(\displaystyle\int_{t}^{t+T}
\displaystyle\frac{|\langle
y(s),By(s)\rangle|^{2}}{\|y(s)\|^{r}}ds\bigg)^{\frac{1}{2}}\cdot
\end{equation}
\end{lem}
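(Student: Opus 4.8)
The plan is to treat the closed-loop system as a semilinear evolution equation, establish well-posedness by semigroup methods, and then obtain the integral inequality (\ref{Obis}) by a Duhamel comparison between the controlled trajectory and the free semigroup evolution, closed off by two applications of the Cauchy--Schwarz inequality.

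First I would address well-posedness. Substituting $p_r$ into (\ref{SS}) gives $y'=Ay+F(y)$ with $F(y)=-\|y\|^{-r}\langle y,By\rangle\,By$ and $F(0)=0$. Since $\|F(y)\|\le\|B\|^2\|y\|^{3-r}$ (so that, for $r\le 2$, $F$ extends continuously to the origin) and $F$ is locally Lipschitz on $H\setminus\{0\}$, the standard fixed-point construction for contraction semigroups produces a unique local mild solution. The energy computation $\frac{d}{dt}\frac12\|y(t)\|^2=\mathrm{Re}\langle Ay,y\rangle+\mathrm{Re}(p_r\langle By,y\rangle)=\mathrm{Re}\langle Ay,y\rangle-\|y\|^{-r}|\langle By,y\rangle|^2\le 0$, which uses the dissipativity of $A$ and the sign of the feedback built into $p_r$ (the dissipating energy inequality of the Introduction), shows that $t\mapsto\|y(t)\|$ is nonincreasing. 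This a priori bound $\|y(t)\|\le\|y_0\|$ rules out finite-time blow-up and promotes the local solution to a global one; since $y=0$ is an equilibrium (there $p_r=0$), once a trajectory vanishes it stays at $0$, which secures uniqueness.

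For the estimate, fix $t\ge 0$ and $T>0$ and set $z(s):=S(s)y(t)$ (the free evolution, which appears on the left of (\ref{Obis})) and $w(s):=y(t+s)$ (the controlled evolution, which appears on the right), both equal to $y(t)$ at $s=0$. Duhamel's formula gives $w(s)-z(s)=\int_0^s S(s-\tau)p_r(t+\tau)Bw(\tau)\,d\tau$. Using $\|S(\cdot)\|\le 1$, $|p_r(t+\tau)|=|\langle w(\tau),Bw(\tau)\rangle|\,\|w(\tau)\|^{-r}$, the contraction bound $\|w(\tau)\|\le\|y(t)\|$, and the factorisation $\frac{|\langle w,Bw\rangle|}{\|w\|^{r-1}}=\frac{|\langle w,Bw\rangle|}{\|w\|^{r/2}}\,\|w\|^{1-r/2}$ with $\|w\|^{1-r/2}\le\|y(t)\|^{1-r/2}$ (valid because $r\le 2$), the Cauchy--Schwarz inequality on $[0,s]\subset[0,T]$ yields
\begin{equation*}
\|w(s)-z(s)\|\le \|B\|\,\|y(t)\|^{1-r/2}\,T^{1/2}\,J,\qquad J:=\bigg(\int_t^{t+T}\frac{|\langle y(s),By(s)\rangle|^2}{\|y(s)\|^r}\,ds\bigg)^{1/2}.
\end{equation*}

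Finally I would compare the two observation quantities. Writing $\langle Bz,z\rangle-\langle Bw,w\rangle=\langle B(z-w),z\rangle+\langle Bw,z-w\rangle$ and bounding by $2\|B\|\,\|y(t)\|\,\|z-w\|$, integration over $[0,T]$ together with the previous display gives $\int_0^T|\langle Bz,z\rangle-\langle Bw,w\rangle|\,ds\le 2T^{3/2}\|B\|^2\|y(t)\|^{2-r/2}J$; splitting $\|y(t)\|^{2-r/2}=\|y(t)\|^{2-r}\|y(t)\|^{r/2}$ and using $\|y(t)\|^{2-r}\le\|y_0\|^{2-r}$ (again since $r\le 2$) reproduces the first term of (\ref{Obis}). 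The remaining piece is controlled by one more Cauchy--Schwarz, $\int_0^T|\langle Bw,w\rangle|\,ds=\int_0^T \|w\|^{r/2}\,\frac{|\langle Bw,w\rangle|}{\|w\|^{r/2}}\,ds\le T^{1/2}\|y(t)\|^{r/2}J$, giving the second term, and adding the two pieces yields (\ref{Obis}) after the change of variables $s\mapsto t+s$. The main obstacle is this last paragraph: one must extract exactly one power of $J$ (rather than the $J^2$ that appears in the dissipation) while matching the precise powers of $\|y(t)\|$ and $\|y_0\|$, and this is exactly where the hypothesis $r\le 2$ is used, both to dominate $\|w\|^{1-r/2}$ by $\|y(t)\|^{1-r/2}$ and to guarantee $2-r\ge 0$ so that $\|y(t)\|^{2-r}\le\|y_0\|^{2-r}$.
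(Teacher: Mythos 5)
The paper itself offers no proof of this lemma --- it is imported verbatim from \cite{ouz-al12} --- so your argument can only be measured against the standard derivation, and your route (well-posedness by a fixed-point/density argument plus the energy identity, then Duhamel comparison of $y(t+s)$ with $S(s)y(t)$ closed by two applications of Cauchy--Schwarz) is exactly that derivation. The decomposition $\langle Bz,z\rangle-\langle Bw,w\rangle=\langle B(z-w),z\rangle+\langle Bw,z-w\rangle$, the uniform bound $\|w(s)-z(s)\|\le \|B\|\,\|y(t)\|^{1-r/2}T^{1/2}J$, and the bookkeeping $\|y(t)\|^{2-r/2}=\|y(t)\|^{2-r}\|y(t)\|^{r/2}\le\|y_0\|^{2-r}\|y(t)\|^{r/2}$ reproduce the stated constant correctly, and every step is valid whenever $0\le r\le 2$ --- which covers the only two instances ($r=0$ and $r=2$) actually invoked in this paper.

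There is, however, one genuine gap relative to the statement as quantified over all $r\in\,]-\infty,2]$: your last estimate, $\int_0^T\|w\|^{r/2}\,\frac{|\langle Bw,w\rangle|}{\|w\|^{r/2}}\,ds\le T^{1/2}\|y(t)\|^{r/2}J$, silently uses $\|w(s)\|^{r/2}\le\|y(t)\|^{r/2}$. Since $\|w(s)\|\le\|y(t)\|$, this monotonicity step is correct only when $r\ge 0$; for $r<0$ the exponent $r/2$ is negative and the inequality reverses, so the second term of (\ref{Obis}) is not obtained by this route. (Your earlier uses of monotonicity are safe: $1-r/2\ge 0$ and $2-r\ge 0$ hold for every $r\le 2$.) To honestly claim the full range you would need either a separate argument for $r<0$ (e.g.\ a lower bound on $\|y(s)\|$ over $[t,t+T]$, or a different splitting of $|\langle Bw,w\rangle|$ that avoids the factor $\|w\|^{r/2}$), or to restrict the statement to $r\in[0,2]$, which is all this paper ever uses. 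You should either flag that restriction explicitly or supply the missing case.
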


\subsection{The case of quadratic control}

Let (\ref{SS}) be as given in the introduction. With the control $p_0(t)$, the system (\ref{SS}) becomes
\begin{equation}\label{cloosed}
y^\prime (t) = Ay(t)+F_0(y(t)),\ y(0)=y_{0},
\end{equation}
where $F_0(y)=-\langle y,By\rangle By, \; y\in H$.

\medskip

The next result discusses the asymptotic behaviour of the solution $y(t)$ of   (\ref{cloosed}).

\begin{thm} \label{stabp0}
Suppose that:
\begin{enumerate}
\item[(i)]
$A$ generates a semigroup of contractions $S(t)$,
\item[(ii)]
$B$ is linear and bounded.
\end{enumerate}
\begin{enumerate}
\item
If there exist $\delta, T>0$ such that
\begin{equation}\label{obss}
\int_{0}^{T}|\langle BS(t)z,S(t)z\rangle|dt\geq \delta \, \|z\|_L^2,\
\forall z \in D(A),
\end{equation}
then for all $y_0 \in D(A),$ the feedback law
\begin{equation}\label{p}
p_0(t)=-\langle y(t),By(t)\rangle,
\end{equation}
guarantees the  following decay estimate for the respective solution to  (\ref{SS}),
$$
\|y(t)\|^2 = O\left(t^{-\frac{\theta}{2-\theta}}\right)\ as \  t\rightarrow +\infty.
$$
\item Suppose that  the function  ${\mathcal K}: x \rightarrow x {\mathcal H}(x)$  is invertible on $\mathbb{R}_+$. If there exist $\delta, T>0$ such that
\begin{equation}\label{obsp0}
\int_{0}^{T}|\langle BS(t)z,S(t)z\rangle|dt\geq \delta \, \|z\|_K^2 \, \mathcal{H} \left(\frac{\|z\|^{2}}{\|z\|_K^2}\right),\
\forall z \in K \setminus\left\{0\right\},
\end{equation}
 then the feedback law (\ref{p})  guarantees the  following decay estimate for the respective solution to  (\ref{SS}):
\begin{equation}
\label{estimh0}
\|y(t)\|^2 = O \left(\mathcal{K}^{-1} \left(\frac{1}{ t} \right) \right) \ as \  t\rightarrow +\infty\cdot
\end{equation}
\end{enumerate}
\end{thm}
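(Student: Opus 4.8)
The plan is to run a single dissipation–observability argument for both parts, with the two statements differing only in the final algebra. First I would make the energy identity precise. Writing $E(t)=\tfrac12\|y(t)\|^2$ for the solution of \eqref{cloosed} and using that $A$ generates contractions together with $p_0(t)=-\langle y(t),By(t)\rangle$, I get
$$
E'(t)=\mathrm{Re}\,\langle Ay(t),y(t)\rangle-|\langle y(t),By(t)\rangle|^2\le -|\langle y(t),By(t)\rangle|^2 .
$$
Integrating on $[t,t+T]$ yields the dissipation relation $\int_t^{t+T}|\langle y(s),By(s)\rangle|^2\,ds\le E(t)-E(t+T)$; in particular $E$ is nonincreasing and $\|y(t)\|\le\|y_0\|$ for all $t$.

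Next I would feed this into Lemma \ref{lem2} with $r=0$. Applied to the datum $y(t)$ it produces a constant $C=2T^{3/2}\|B\|^2\|y_0\|^2+T^{1/2}$, independent of $t$, with
$$
\int_0^T|\langle BS(s)y(t),S(s)y(t)\rangle|\,ds\le C\Big(\int_t^{t+T}|\langle y(s),By(s)\rangle|^2\,ds\Big)^{1/2}\le C\,(E(t)-E(t+T))^{1/2}.
$$
The observability hypothesis, applied with $z=y(t)$, bounds the left-hand side from below: by \eqref{obss} it is $\ge\delta\|y(t)\|_L^2$, and by \eqref{obsp0} it is $\ge\delta\|y(t)\|_K^2\,\mathcal{H}(\|y(t)\|^2/\|y(t)\|_K^2)$. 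Squaring gives, in the two cases,
$$
\delta^2\|y(t)\|_L^4\le C^2\,(E(t)-E(t+T)),\qquad
\delta^2\Big(\|y(t)\|_K^2\,\mathcal{H}\big(\tfrac{\|y(t)\|^2}{\|y(t)\|_K^2}\big)\Big)^2\le C^2\,(E(t)-E(t+T)).
$$

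The remaining task is to close a recurrence for $a_k:=\|y(kT)\|^2$. For part (1) I would use \eqref{interpolation}, which yields $\|y(t)\|_L^2\ge\|y(t)\|^{2/\theta}\|y(t)\|_K^{-2(1-\theta)/\theta}$; together with the uniform bound $\sup_{t\ge0}\|y(t)\|_K\le M$ (discussed below) the negative power of $\|y(t)\|_K$ is controlled, so $\|y(t)\|_L^4\ge c\,\|y(t)\|^{4/\theta}$. Since $a_{k+1}\le a_k$, the squared inequality becomes $a_{k+1}\le a_k-c'\,a_{k+1}^{2/\theta}$, which is exactly Lemma \ref{lem1} with $\alpha+2=2/\theta$, i.e. $\alpha=2(1-\theta)/\theta>-1$; the conclusion $a_k\le M'(k+1)^{-1/(\alpha+1)}=M'(k+1)^{-\theta/(2-\theta)}$ gives the stated rate after passing from $k$ to $t\simeq kT$. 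For part (2) I would combine $\sup_t\|y(t)\|_K\le M$, the embedding $\|y\|\le c\|y\|_K$ and the defining identity $\mathcal{K}(x)=x\mathcal{H}(x)$ to reduce the left-hand side to a single monotone function of $a_{k+1}$, aiming at a recurrence of the form $a_{k+1}\le a_k-c\,a_{k+1}\mathcal{K}(a_{k+1}/M)$; a comparison argument generalizing Lemma \ref{lem1} (inverting the associated continuous equation $\dot a=-c\,a\,\mathcal{K}(a)$, which is where invertibility of $\mathcal{K}$ enters) then yields $a_k=O(\mathcal{K}^{-1}(1/k))$ and hence \eqref{estimh0}.

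I expect two steps to carry the real difficulty. The first, needed for both parts, is the a priori bound $\sup_{t\ge0}\|y(t)\|_K<\infty$ for regular data $y_0\in D(A)$: since $\|y\|_K\sim\|y\|_{D(A)}$ this amounts to controlling $\|Ay(t)\|$, equivalently $\|y'(t)\|$, and differentiating \eqref{cloosed} only gives $\frac{d}{dt}\|y'(t)\|^2\le C\|y(t)\|^2\|y'(t)\|^2$, so a naive Gronwall estimate grows exponentially. Obtaining a uniform bound requires exploiting the dissipation, for instance the favourable sign of $\mathrm{Re}\,\langle DF_0(y)y',y'\rangle$ when $B$ is self-adjoint, and must be handled with care. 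The second difficulty, specific to part (2), is the algebraic reconciliation of the square root coming from Lemma \ref{lem2} with the single power of $\mathcal{K}$ demanded by \eqref{estimh0}: recasting $\big(\|y\|_K^2\,\mathcal{H}(\|y\|^2/\|y\|_K^2)\big)^2$ as a constant multiple of $a_{k+1}\mathcal{K}(a_{k+1}/M)$, uniformly in the unknown ratio $\|y\|^2/\|y\|_K^2$, is the genuinely delicate point and is what forces the invertibility hypothesis on $\mathcal{K}$.
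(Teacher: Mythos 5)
Your skeleton coincides with the paper's: energy dissipation over $[kT,(k+1)T]$, Lemma \ref{lem2} with $r=0$, the observability inequality applied to $z=y(kT)$, interpolation \eqref{interpolation}, a uniform bound on $\|y(t)\|_K$, and Lemma \ref{lem1} with $\alpha+2=2/\theta$. But the proposal leaves open exactly the two steps you yourself flag as carrying the difficulty, and in both cases the resolution you gesture at is not the one that works. For the a priori bound $\sup_{t\ge 0}\|y(t)\|_{D(A)}<\infty$, differentiating the equation and running Gronwall is indeed a dead end; the paper does not do this. It invokes the standard property of nonlinear contraction semigroups that $t\mapsto\|\tfrac{d^+}{dt}y(t)\|$ is nonincreasing, so that
$$\|Ay(t)\|\le \|Ay(t)+F_0(y(t))\|+\|F_0(y(t))\|\le \|Ay_0+F_0(y_0)\|+\|B\|^2\|y_0\|^3\le \|Ay_0\|+2\|B\|^2\|y_0\|^3,$$
which yields \eqref{K-D} with an explicit constant $C(\|y_0\|)$ without any Gronwall argument. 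Without this (or some substitute, e.g.\ verifying monotonicity of $-F_0$), part (1) of your proof is not closed, since the constant $c'$ in your recurrence depends on the unproved bound $M$.

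For part (2) there is a second, more substantive problem. Squaring the observability inequality \eqref{obsp0} produces $\|y\|_K^4\,\mathcal{H}^2\bigl(\|y\|^2/\|y\|_K^2\bigr)$ on the left, i.e.\ the \emph{square} of $\mathcal{H}$, so your target recurrence $a_{k+1}\le a_k-c\,a_{k+1}\mathcal{K}(a_{k+1}/M)$ with $\mathcal{K}(x)=x\mathcal{H}(x)$ does not follow from the estimates: you would need $\mathcal{H}^2\ge c\,\mathcal{H}$, i.e.\ $\mathcal{H}$ bounded below, which is false in the intended applications ($\mathcal{H}(x)=e^{-c_T/\sqrt{x}}$). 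Moreover the ``comparison argument generalizing Lemma \ref{lem1}'' is not supplied and is not in the paper. The paper's route avoids inventing a new lemma: it sets $e_k=s_k\,\mathcal{H}^2\bigl(s_k/(C(\|y_0\|)^2\|y_0\|_{D(A)}^2)\bigr)$, checks that $e_k$ and $e_k/s_k$ are nonincreasing, uses $\|y(kT)\|_K^4\ge C_2\,s_k^2$ and the monotonicity of $\mathcal{H}$ to get $s_k-s_{k+1}\ge c\,s_k e_k$, hence $e_k-e_{k+1}\ge c\,e_{k+1}^2$, and applies Lemma \ref{lem1} with $\alpha=0$ to conclude $e_k=O(1/k)$, from which \eqref{estimh0} is read off. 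If you want to complete your version, you should either reproduce this auxiliary-sequence device or state and prove the discrete comparison lemma you are appealing to, and in either case you must reconcile $\mathcal{H}^2$ with $\mathcal{K}$ explicitly rather than by fiat.
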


\begin{rem}
 In (\ref{obsp0}), if we take $\mathcal{H}(x) = 1, \, \forall x\ge 0 $ then $\|y(t)\|^2 = O \left(\frac{1}{t}\right),\ as \  t\rightarrow +\infty.$ So, the estimate (\ref{estimh0}) generalizes the stability result obtained in \cite{ber,ouz2} as an implication of exact observability inequality (\ref{obss1}).
\end{rem}

\begin{proof}
{\it Proof of the first assertion:}
Let $y_0\in D(A).$ We can assume that $y_0\ne0.$

According to Lemma \ref{lem2}, the system (\ref{cloosed}) possesses a unique global mild solution, which is continuous with respect to the initial state, and satisfies the following  variation of parameters formula:

\begin{equation}
    S(t)y_{0} -y(t)=\displaystyle\int_{0}^{t}\displaystyle\langle y(s),By(s)\rangle S(t-s)By(s)ds,
\end{equation}
and satisfies the  decay estimate
\begin{equation}\label{O}
\displaystyle\int_{0}^{T}|\langle BS(s)y(t),S(s)y(t)\rangle|ds\leq \bigg(2T^{\frac{3}{2}}   \|B\|  \|y_0\|^{2}+T^{\frac{1}{2}}\bigg) \bigg(\displaystyle\int_{t}^{t+T}
\displaystyle|\langle
y(s),By(s)\rangle|^{2}  ds\bigg)^{\frac{1}{2}}\cdot
\end{equation}
Moreover since $A$ is dissipative, an approximation argument (see \cite{bal2}) shows that
\begin{equation}\label{eneg}
\|y(t)\|^{2} -  \|y(s)\|^2 \leq -2\displaystyle\int_s^t|\langle y(\tau),By(\tau)\rangle|^{2} d\tau,\; 0\le s\le t\cdot
\end{equation}
It follows that
$$\|y(t)\|\leq \|y_{0}\|,\ \forall t\ge 0\cdot$$

\medskip

Let us consider the sequence
$s_{k}= \|y(kT)\|^{2}$, $k\in \mathbb{N}$. \\
Applying the inequality (\ref{eneg}) for $s=kT$ and $t=(k+1)T]$  and using (\ref{O}), we derive
\begin{equation}\label{*estimate}
s_{k}-s_{k+1}\ge  \frac{2\big ( \displaystyle\int_{0}^{T}|\langle BS(s)y(kT),S(s)y(kT)\rangle|ds\big)^2}  {\big(2T^{\frac{3}{2}}   \|B\|  \|y_0\|^{2}+T^{\frac{1}{2}}\big)^2 }
\end{equation}
which by, (\ref{interpolation}) and (\ref{obss}),  gives:
$$ \|y(kT)\|_{K}^{\frac{4(1-\theta)}{\theta}} \big ( s_{k}-s_{k+1} \big )  \geq \frac{2\delta^2 }{\big(2T^{\frac{3}{2}}   \|B\|  \|y_0\|^{2}+T^{\frac{1}{2}}\big)^2} s_{k}^{2/\theta}.$$
Moreover, we can see from nonlinear semigroup properties that for all $t\ge 0$, we have
$$\|Ay(t)\|\le \|Ay_0+F_0(y_0)\| + \|F_0(y_0)\|$$
$$
\le  \|Ay_0\|+ 2\|B\|^2 \|y_0\|^3,
$$
from which it comes (recall that $\|y_k\|_K\sim \|y\|_{D(A)}$)
\begin{equation}\label{K-D}
\|y(t)\|_K \le C(\|y_0\|) \|y_0\|_{D(A)}, \, \forall t\ge 0,
\end{equation}
with $ C(\|y_0\|)=:C_1 \big ( 1 + \|B\|^2 \|y_0\|^2  \big ), $ and  where $C_1$ is a positive constant which is independent of  $y_0\cdot$\\
Thus
$$
s_{k}-s_{k+1} \ge C'(\|y_0\|) s_{k}^{2/\theta},
$$
where $$ C'(\|y_0\|) =\frac{2\delta^2}{ \big (2T^{\frac{3}{2}}   \|B\|  \|y_0\|^{2}+T^{\frac{1}{2}} \big)^2  \big (C(\|y_0\|)  \|y_0\|_{D(A)}  \big)^{\frac{4(1-\theta)}{\theta}}}\cdot$$
Then, since $\|y(t)\|^2$ decreases in time, this implies
$$s_{k}-s_{k+1} \ge C'(\|y_0\|) s_{k+1}^{2/\theta},$$
which, by applying Lemma \ref{lem1}, gives
$$
s_k=O\left(\frac{1}{k^{\frac{\theta}{2-\theta}}}\right)\cdot
$$
Then using again that $\|y(t)\|^2$ decreases, we deduce that
$$
\|y(t)\|^2 = O\left(\frac{1}{t^{\frac{\theta}{2-\theta}}}\right)\ as \  t\rightarrow +\infty\cdot
$$

{\it Proof of the second assertion.}

Let $y_0\in D(A)\setminus \{0\}, $ let $C(\|y_0\|)$ be the constant given in (\ref{K-D}) and consider the sequence
$$e_{k}= s_k \mathcal{H}^2\left(\frac{s_k}{C(\|y_0\|)^2 \|y_0\|^2_{D(A)}}\right), \; k\in \mathbb{N}\cdot$$
Let us  observe that under the assumptions on $\mathcal{H}$, the two sequences $e_k$ and $\frac{e_k}{s_k}$ are decreasing.\\
Using (\ref{obsp0}), we derive from (\ref{*estimate})
$$ s_{k}-s_{k+1} \geq \frac{2\delta^2  }{\bigg(2T^{\frac{3}{2}} \|B\| \|y_0\|^2 + T^{\frac{1}{2}}\bigg)^2} \|y(kT\|_K^4 \mathcal{H} \big (\frac{\|y(kT\|^2}{\|y(kT\|^2_K} \big )^2\cdot $$
Moreover, it comes from (\ref{K-D}) and the increasing of  $\mathcal{H}$ that
$$
\mathcal{H} \big (\frac{\|y(kT\|^2}{\|y(kT\|^2_K} \big ) \ge \mathcal{H} \big ( \frac{s_k^2}{C(\|y_0\|)^2 \|y_0\|^2_{D(A)}}\big )\cdot
$$
Having in mind that $\|y_k\|_K\sim \|y\|_{D(A)}\ge \|y_0\|, $ we deduce that
$$ s_{k}-s_{k+1} \geq \frac{2\delta^2 C_2 }{\bigg(2T^{\frac{3}{2}} \|B\| \|y_0\|^2 + T^{\frac{1}{2}}\bigg)^2} s_k \, e_k, $$
for some  constant $C_2>0$ depending on $\theta$.
Thus, using the decreasing of $e_k$ and $\frac{e_k}{s_k}$ it comes
$$
e_k - \frac{e_k}{s_k}  s_{k+1}  \geq \frac{2\delta^2 C_2}{\bigg(2T^{\frac{3}{2}} \|B\| \|y_0\|^2+ T^{\frac{1}{2}}\bigg)^2} \, e_k^2,
$$
and
$$
e_k - e_{k+1} \geq \frac{2\delta^2 C_2}{\bigg(2T^{\frac{3}{2}} \|B\| \|y_0\|^2 + T^{\frac{1}{2}}\bigg)^2} \, e_{k+1}^2\cdot
$$
Applying Lemma \ref{lem1}, we deduce that:
$$
\|y(t)\|^2 = O \left( \mathcal{K}^{-1} \left(\frac{1}{ t}\right) \right) \ as \  t\rightarrow +\infty\cdot
$$
\end{proof}

\medskip
\subsection{The case of normalized control}

Let (\ref{SS}) be as given in the introduction and consider  the control
$$p_2(t)=-\displaystyle\frac{\langle y(t),By(t) \rangle}{\|y(t)\|^2} {\bf 1}_{\{t\ge 0 : y(t)\ne 0\}}\cdot $$
Thus the system (\ref{SS}) takes the form
\begin{equation}\label{cloosed2}
y^\prime (t) = Ay(t)+F_2(y(t)),\ y(0)=y_{0},
\end{equation}
where $F_2(y)=-\frac{\langle y,By\rangle}{\|y\|^2} By, $ if $ y\neq0$ and $F_2(0)=0.$

\medskip

In the following result we provide a uniform estimate for the solution of (\ref{cloosed2}).

\begin{thm} \label{stabp2}
Suppose that:
\begin{enumerate}
\item[(i)]
 $A$ generates a semigroup of contractions $S(t)$,
\item[(ii)] $B$ is linear and bounded.
\end{enumerate}
\begin{enumerate}
\item If the estimate (\ref{obss}) is satisfied, then  for all $y_0\in D(A)$ the feedback law
\begin{equation}\label{p2}
    p_2(t)=-\frac{\langle y(t),By(t)\rangle}{\|y(t)\|^2}  {\bf 1}_{\{t\ge 0:\; y(t)\ne 0\}},
\end{equation}
leads to the following  decay estimate  for the respective solution to  (\ref{SS})
\begin{equation}\label{estim2}
\|y(t)\|^2 \le C t^{-\frac{\theta}{1-\theta}} \|y_0\|^2_{D(A)},\; \forall t>0,\; \forall y_0\in D(A),
\end{equation}
for some constant $C>0$ which is independent of $y_0$.

\item Suppose  that    ${\mathcal H}(x)$ is invertible  on $\mathbb{R}_+$ and  that the function $x \mapsto \frac{1}{x} \, {\mathcal H}^2(x)$  is increasing on $(0,1).$ If the estimate (\ref{obsp0}) is satisfied, then the feedback law (\ref{p2})
leads to the following  decay estimate  for the respective solution to  (\ref{SS}):
\begin{equation}
\label{estimh}
\|y(t)\|^2 \leq C \, \mathcal{H}^{-1} \left(\frac{1}{ t} \right) \, \|y_0\|_{D(A)}^2, \forall t > 0, \, \forall y_0 \in D(A),
\end{equation}
for some constant $C>0$ which is independent of $y_0$.
\end{enumerate}
\end{thm}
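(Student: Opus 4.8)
The plan is to reproduce the sampled-energy argument that proves Theorem~\ref{stabp0}, adapted to the normalized feedback (\ref{p2}). Two features specific to $p_2$ drive everything and, in particular, account for the constant $C$ being independent of $y_0$. First, the closed-loop energy identity for (\ref{cloosed2}) reads
$$\|y(t)\|^2-\|y(s)\|^2\le -2\int_s^t\frac{|\langle y(\tau),By(\tau)\rangle|^2}{\|y(\tau)\|^2}\,d\tau,\qquad 0\le s\le t,$$
so $\|y(t)\|$ is nonincreasing, while Lemma~\ref{lem2} with $r=2$ furnishes the companion inequality (\ref{Obis}) whose right-hand side carries exactly the same normalized integrand $|\langle y,By\rangle|^2/\|y\|^2$. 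Second, and this is the decisive point, the nonlinearity $F_2(y)=-\langle y,By\rangle By/\|y\|^2$ has only linear growth, $\|F_2(y)\|\le\|B\|^2\|y\|$, in contrast with the cubic growth of $F_0$.

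I would first record well-posedness and $D(A)$-regularity through Lemma~\ref{lem2}, and then exploit the nonlinear semigroup generated by $A+F_2$: the map $t\mapsto\|Ay(t)+F_2(y(t))\|$ is nonincreasing, so that, combining with $\|F_2(y(t))\|\le\|B\|^2\|y(t)\|\le\|B\|^2\|y_0\|$ and $\|y(t)\|\le\|y_0\|$, one obtains the analogue of (\ref{K-D}),
$$\|y(t)\|_K\le C_1\,\|y_0\|_{D(A)},\qquad\forall t\ge0,$$
but now with $C_1$ depending only on $\|B\|$ and on the equivalence constants between $\|\cdot\|_K$ and $\|\cdot\|_{D(A)}$. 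This uniform bound is what ultimately strips the $y_0$-dependence from the leading constant and is the essential gain over the $p_0$ case.

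For the first assertion I would put $s_k=\|y(kT)\|^2$. Squaring (\ref{Obis}) at $t=kT$, substituting into the energy identity over $[kT,(k+1)T]$ and invoking the observability (\ref{obss}) with $z=y(kT)$ bounds $s_k-s_{k+1}$ from below by a multiple of $\|y(kT)\|_L^4/\|y(kT)\|^2$; the interpolation inequality (\ref{interpolation}) then turns $\|y(kT)\|_L$ into powers of $\|y(kT)\|$ and $\|y(kT)\|_K$, and the displayed uniform bound absorbs the $\|y(kT)\|_K$ factor into a constant of the form $c\,\|y_0\|_{D(A)}^{-\nu}$. Using that $s_k$ is nonincreasing to pass from $s_k$ to $s_{k+1}$ on the right, one reaches a recursion $s_k-s_{k+1}\ge c\,\|y_0\|_{D(A)}^{-\nu}\,s_{k+1}^{\,\mu}$ of the type governed by Lemma~\ref{lem1}; that lemma yields the polynomial rate, and tracking the powers of $\|y_0\|_{D(A)}$ through its constant $M$ reproduces the factor $\|y_0\|_{D(A)}^2$ with a $y_0$-independent $C$. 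Converting from the samples $k$ to continuous $t$ via monotonicity gives (\ref{estim2}). The care here is purely in the exponent arithmetic and in checking that the power of $\|y_0\|_{D(A)}$ collected in $M$ is exactly the one appearing in (\ref{estim2}).

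For the second assertion I would mimic the weighted-sequence device of Theorem~\ref{stabp0}. Replacing (\ref{obss}) by (\ref{obsp0}) and repeating the above steps produces, after writing $x=\|y(kT)\|^2/\|y(kT)\|_K^2\in(0,1)$, a lower bound for $s_k-s_{k+1}$ proportional to $\|y(kT)\|_K^2\,\mathcal H^2(x)/x$. The two standing hypotheses — that $\mathcal H$ be invertible on $\mathbb{R}_+$ and that $x\mapsto\mathcal H^2(x)/x$ be increasing on $(0,1)$ — are exactly what allow me to control the varying factor $\|y(kT)\|_K^2$ against $\mathcal H$ using the bound above, to build an auxiliary sequence $e_k$ for which both $e_k$ and $e_k/s_k$ are decreasing, and to telescope the estimate into the form $e_k-e_{k+1}\ge c\,e_{k+1}^2$. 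Lemma~\ref{lem1} with $\alpha=0$ then gives $e_k=O(1/k)$, and inverting $\mathcal H$ returns the rate (\ref{estimh}). I expect the genuine obstacle of the whole proof to be concentrated precisely here: reconciling the monotonicity conditions on $\mathcal H$ with the fact that $\|y(kT)\|_K$ varies with $k$ (only bounded, not constant), so that $\mathcal H^{-1}$ can be applied cleanly at the end, all while keeping every constant independent of $y_0$.
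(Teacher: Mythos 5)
Your proposal follows essentially the same route as the paper's proof: well-posedness and the normalized energy inequality from Lemma~\ref{lem2} with $r=2$, the linear growth of $F_2$ yielding a $y_0$-independent bound $\|y(t)\|_K\le C_*\|y_0\|_{D(A)}$ (the paper's (\ref{C*})), the sampled sequence $s_k=\|y(kT)\|^2$ with the recursion $s_k-s_{k+1}\ge c\,\|y_0\|_{D(A)}^{-4(1-\theta)/\theta}s_{k+1}^{2/\theta-1}$ closed by Lemma~\ref{lem1}, and for the second assertion the auxiliary sequence $e_k=\mathcal H^2\bigl(s_k/(C_*\|y_0\|_{D(A)}^2)\bigr)$ whose monotonicity (together with that of $e_k/s_k$, guaranteed by the hypothesis that $x\mapsto\mathcal H^2(x)/x$ is increasing) telescopes to $e_k-e_{k+1}\ge c\,e_{k+1}^2$. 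The point you flag as the main obstacle is resolved in the paper exactly as you anticipate, by freezing $\|y(kT)\|_K$ via the uniform bound (\ref{C*}) before applying the monotonicity of $\mathcal H$.
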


\begin{proof}
By dissipativeness, we can assume without loss of generality that $y(t)$ does not vanish, so that the control $p_2(t)$ takes the form $p_2(t)=-\frac{\langle y(t),By(t)\rangle}{\|y(t)\|^2}.$

\medskip

{\it Proof of the first assertion.}

\medskip

Applying  Lemma \ref{lem2} for $r=2$, we deduce that system (\ref{cloosed2}) possesses a unique global mild solution   which is continuous with respect to initial states, and given by the following  variation of parameters formula
$$
S(t)y_{0} -y(t)=\displaystyle\int_{0}^{t}\displaystyle\frac{\langle y(s),By(s)\rangle}{\|y(s)\|^2} S(t-s)By(s)ds\cdot
$$
Moreover, we can show that (see \cite{bal2})
\begin{equation}\label{eneg2}
\|y(t)\|^{2} -  \|y(s)\|^2 \leq -2\displaystyle\int_s^t\frac{|\langle y(\tau),By(\tau)\rangle|^2}{\|y(\tau)\|^2} d\tau,\; 0\le s\le t,
\end{equation}
from which it comes immediately  $$\|y(t)\|\leq \|y_{0}\|,\ \forall t\ge 0\cdot$$
Moreover,   we can again see from Lemma \ref{lem2} that  the solution $y(t)$ of   (\ref{cloosed2}) satisfies the  decay estimate
\begin{equation}\label{O2}
\displaystyle\int_{0}^{T}|\langle BS(s)y(t),S(s)y(t)\rangle|ds\leq \bigg(2T^{\frac{3}{2}}\|B\|^{2}+T^{\frac{1}{2}}\bigg)\|y(t)\|\bigg(\displaystyle\int_{t}^{t+T}
\displaystyle\frac{|\langle
y(s),By(s)\rangle|^{2}}{\|y(s)\|^{2}}ds\bigg)^{\frac{1}{2}}\cdot
\end{equation}
Now, let us consider the sequence $s_{k}=\|y(kT)\|^{2}$, $k\in \mathbb{N}$.\\
 Applying the inequality (\ref{eneg2}) for $s=kT$ and $t=(k+1)T]$  and using  (\ref{obss}) and (\ref{O2}), we derive
$$
\begin{array}{lll}
  s_{k}-s_{k+1} &\ge & \frac{2\bigg ( \displaystyle\int_{0}^{T}|\langle BS(s)y(kT),S(s)y(kT)\rangle|ds\bigg )^2}  {\bigg(2T^{\frac{3}{2}}   \|B\|  +T^{\frac{1}{2}}\bigg)^2 \|y(kT)\|^2}\\
 \\
&\ge & \frac{2\delta^2}  {\bigg(2T^{\frac{3}{2}}   \|B\|  +T^{\frac{1}{2}}\bigg)^2} \frac{\|y(kT)\|^4_{L}}{ \|y(kT)\|^2},
\end{array}
$$
which by  (\ref{interpolation})   gives
$$  \|y(kT)\|_{K}^{\frac{4(1-\theta)}{\theta}} \big ( s_{k}-s_{k+1} \big )  \geq \frac{2\delta^2 }{\bigg(2T^{\frac{3}{2}}   \|B\|  +T^{\frac{1}{2}}\bigg)^2} s_{k}^{\frac{2}{\theta}  - 1}.$$
We have
$$
\begin{array}{lll}
  \|Ay(t)\| & \le &\|Ay(t)+F_2(y(t))\| + \|F_2(y(t))\| \\
  \\
  &\le& (1+2\|B\|^2)  \|y_0\|_{D(A)}\cdot
\end{array}
$$
 It follows that
\begin{equation}\label{C*}
\|y(t)\|_K \le C_* \|y_0\|_{D(A)},\; \forall t\ge 0,
\end{equation}
where $C_*$ is a positive constant which is independent of $y_0\cdot$\\
Thus
$$
s_{k}-s_{k+1}  \ge C \|y_0\|_{D(A)}^{-\frac{4(1-\theta)}{\theta}} s_{k}^{\frac{2}{\theta}  - 1},
$$
where $ C>0$ is independent of $y_0\cdot$\\
In the sequel, $C>0$ will denote a generic  constant which is independent of $y_0\cdot$\\
Since $\|y(t)\|$ decreases in time, this implies
$$s_{k}-s_{k+1}  \ge C \|y_0\|_{D(A)}^{-\frac{4(1-\theta)}{\theta}} s_{k+1}^{\frac{2}{\theta}  - 1}\cdot$$
Applying Lemma \ref{lem1}, we deduce that
$$
\begin{array}{lll}
\exists C>0,\;\; s_k &\le & \frac{C}{k^{\frac{\theta}{1-\theta}}} \left(  \|y_0\|_{D(A)}^{-\frac{4(1-\theta)}{\theta}}\right)^{\frac{\theta}{2(\theta-1)}}\\
\\
&=&\frac{C}{k^{\frac{\theta}{1-\theta}}}  \|y_0\|^2_{D(A)}\cdot
\end{array}
$$

\medskip

Then using again that $\|y(t)\|$ decreases, we deduce that
$$
\|y(t)\|^2 \le \frac{C}{t^{\frac{\theta}{1-\theta}}}   \|y_0\|^{2}_{D(A)},\;\; (C>0)\cdot
$$

\medskip
{\it Proof of the second assertion.}

We consider the sequence $$e_{k}=\mathcal{H}^2\left(\frac{s_k}{C_* \|y_0\|^2_{D(A)}}\right),\;  k\in \mathbb{N},$$
 where $C_*$ is the constant given in (\ref{C*}).

Here, the two sequences $e_k$ and $\frac{e_k}{s_k}$ decreasing.\\
We deduce from the inequalities (\ref{O2}) and (\ref{eneg2}) that
$$
s_{k}-s_{k+1}\ge  \frac{2\bigg ( \displaystyle\int_{0}^{T}|\langle BS(s)y(kT),S(s)y(kT)\rangle|ds\bigg )^2}  {\bigg(2T^{\frac{3}{2}}   \|B\|   + T^{\frac{1}{2}}\bigg)^2  \|y(kT)\|^2}
$$
which by, (\ref{obsp0}) and the increasing of $\mathcal{H}$, gives
$$ s_{k}-s_{k+1} \geq \frac{2\delta^2 C}{\bigg(2T^{\frac{3}{2}} \|B\|  + T^{\frac{1}{2}}\bigg)^2} s_k \, e_k,\; \:(C>0)\cdot$$
Then, using the decreasing of $e_k$ and $\frac{e_k}{s_k}$ it comes
$$
e_k - \frac{e_k}{s_k}  s_{k+1}  \geq \frac{2\delta^2 C }{\bigg(2T^{\frac{3}{2}} \|B\| + T^{\frac{1}{2}}\bigg)^2} \, e_k^2,
$$
and
$$
e_k - e_{k+1} \geq \frac{2\delta^2 C}{\bigg(2T^{\frac{3}{2}} \|B\|  + T^{\frac{1}{2}}\bigg)^2} \, e_{k+1}^2\cdot
$$
Applying Lemma \ref{lem1}, we deduce that
$$
\|y(t)\|^2 \leq C \, \mathcal{H}^{-1} \left(\frac{1}{t}\right) \, \|y_0\|_{D(A)}^2,\; t>0\; (C>0)\cdot
$$

\end{proof}

\subsection{Further  result }

In this part, we will establish a stability result under a null-controllability like assumption. More precisely,  we consider the following  estimate:
\begin{equation}\label{coer-par}
\int_{0}^{T}|\langle B(S(t)y_0),S(t)y_0 \rangle|dt\geq    \delta\|S(T)y_0\|_L^{2}, \; \forall y_0\in D(A),
\end{equation}
for some $\delta, T>0$.

Note that inequality (\ref{coer-par}) can be seen as an estimate is also a weak observability inequality; since  only $S(T)y_0$ may be recovered, not $y_0$.

\begin{thm} \label{stabp1-2}
Suppose that:
\begin{enumerate}
\item[(i)]
$A$ generates a semigroup of contractions $S(t)$,
\item[(ii)]
$B$ is linear and bounded,

\item[(iii)] the estimate (\ref{coer-par}) holds.
\end{enumerate}

Then for all $y_0 \in D(A),$   the respective solution to controls  $p_0(t)$ and $p_2(t)$ tends to $0$ as $t\to+\infty$.
\end{thm}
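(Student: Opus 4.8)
The plan is to handle the two feedbacks $p_0$ and $p_2$ in parallel, since the argument is identical once one invokes the appropriate energy inequality and the decay estimate of Lemma~\ref{lem2}; I write the details for $p_0$ and indicate in parentheses the changes needed for $p_2$. Fix $y_0\in D(A)\setminus\{0\}$. The closed-loop trajectory remains in $D(A)$ and, by the same computation that yields (\ref{K-D}) (resp.\ (\ref{C*})), its $K$-norm is uniformly bounded, say $\|y(t)\|_K\le M$ for all $t\ge 0$; note this uses only hypotheses (i)--(ii), not any observability. From the dissipation inequality (\ref{eneg}) (resp.\ (\ref{eneg2})), $t\mapsto\|y(t)\|^2$ is nonincreasing, hence convergent, and the dissipated energy is finite: $\int_0^{\infty}|\langle y(s),By(s)\rangle|^2\,ds<\infty$. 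In particular the tail integrals vanish,
\[
\int_t^{t+T}|\langle y(s),By(s)\rangle|^2\,ds\longrightarrow 0\qquad(t\to+\infty),
\]
and for $p_2$ the same holds with the normalized integrand $|\langle y(s),By(s)\rangle|^2/\|y(s)\|^2$.

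First I would transfer this decay to the free evolution. Inserting the above tail estimate into (\ref{O}) (resp.\ into (\ref{O2}), whose extra factor $\|y(t)\|\le\|y_0\|$ stays bounded) gives
\[
\int_0^T|\langle BS(s)y(t),S(s)y(t)\rangle|\,ds\longrightarrow 0\qquad(t\to+\infty).
\]
Since $y(t)\in D(A)$, the null-controllability type hypothesis (\ref{coer-par}) may be applied with $y(t)$ in place of $y_0$, yielding
\[
\delta\,\|S(T)y(t)\|_L^2\le\int_0^T|\langle BS(s)y(t),S(s)y(t)\rangle|\,ds\longrightarrow 0,
\]
so that $\|S(T)y(t)\|_L\to 0$. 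This is all the weak observability delivers: it measures the free flow at time $T$, not the current state.

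The main step is then to bridge $S(T)y(t)$ and the actual state $y(t+T)$. Writing the variation of parameters formula on the interval $[t,t+T]$,
\[
y(t+T)-S(T)y(t)=-\int_t^{t+T}\langle y(\tau),By(\tau)\rangle\,S(t+T-\tau)By(\tau)\,d\tau,
\]
and using the contractivity of $S$, the bound $\|y(\tau)\|\le\|y_0\|$ and Cauchy--Schwarz, I get
\[
\|y(t+T)-S(T)y(t)\|\le\|B\|\,\|y_0\|\,T^{1/2}\Big(\int_t^{t+T}|\langle y(\tau),By(\tau)\rangle|^2\,d\tau\Big)^{1/2}\longrightarrow 0
\]
(with the normalized integrand for $p_2$). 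Hence $\|y(t+T)-S(T)y(t)\|\to 0$ in $H$, and because $H\hookrightarrow L$ this convergence persists in $L$. Combined with the previous paragraph, $\|y(t+T)\|_L\to 0$, i.e.\ $\|y(\tau)\|_L\to 0$ as $\tau\to+\infty$.

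It remains to upgrade this $L$-convergence to convergence in $H$. By the interpolation inequality (\ref{interpolation}) and the uniform bound $\|y(t)\|_K\le M$,
\[
\|y(t)\|\le\|y(t)\|_L^{\theta}\,\|y(t)\|_K^{1-\theta}\le M^{1-\theta}\,\|y(t)\|_L^{\theta}\longrightarrow 0,
\]
which gives $\|y(t)\|\to 0$ for both feedbacks and finishes the proof. I expect the genuine obstacle to be the third step: because (\ref{coer-par}) is of null-controllability type, it controls only $S(T)y(t)$, and one must verify that the Duhamel correction --- quantified precisely by the dissipated energy, which tends to zero --- does not destroy the decay when passing from the free flow back to the trajectory.
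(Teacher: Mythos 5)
Your proof is correct and rests on exactly the same three ingredients as the paper's: the energy identity (\ref{eneg}) (resp.\ (\ref{eneg2})) gives a finite dissipated energy whose tails vanish, Lemma \ref{lem2} transfers this to $\int_0^T|\langle BS(s)y(t),S(s)y(t)\rangle|\,ds\to 0$, and hypothesis (\ref{coer-par}) then yields $\|S(T)y(t)\|_L\to 0$. The only divergence is in how the free flow $S(T)y(t)$ is bridged back to the trajectory: the paper first upgrades $S(T)y(t)\to 0$ from $L$ to $H$ by interpolating against the uniform $D(A)$-bound, and then runs the variation-of-constants formula with an $\epsilon$--Gronwall argument; you instead bound $\|y(t+T)-S(T)y(t)\|_H$ directly by Cauchy--Schwarz against the vanishing tail of the dissipated energy, conclude $\|y(t+T)\|_L\to 0$, and interpolate on $y$ itself only at the very end. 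Your ordering is arguably cleaner (it dispenses with Gronwall entirely), at the small price of invoking the continuity of the embedding $H\subset L$ when you add the two pieces in the $L$-norm; that continuity is implicit in the interpolation framework $[K,L]_\theta=H$ and holds in all the paper's applications, so this is not a gap.
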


\begin{proof}

It suffices to prove the case of control $p_2(t)$ since the two cases can be treated similarly. \\
Let $y_0\in D(A)$. Then using (\ref{O2}) and (\ref{coer-par}), we obtain
\begin{equation}\label{estimate2*}
\|S(T)y(t)\|_L^2 \le C \|y_0\| \left( \int_t^{t+T} \frac{|\langle By(s)),J(y(s))\rangle|^2}{\|y(s)\|^2}  {\bf 1}_{\{s\ge 0:\, y(s)\ne 0\}} ds \right)^{1/2},
\end{equation}
for some constant  $C>0$.\\
Moreover, the estimate (\ref{eneg2}) implies the following integral convergence:
 $$\int_0^{+\infty} \frac{|\langle By(s)),J(y(s))\rangle|^2}{\|y(s)\|^2} {\bf 1}_{\{s\ge 0:\, y(s)\ne 0\}} ds  < \infty\cdot
 $$
This, together with (\ref{estimate2*}) implies that
$$S(T)y(t) \rightarrow 0 \; \mbox{in}\; L, \; \mbox{as}\; t\to +\infty\cdot$$
 Then taking into account the  inequalities (\ref{interpolation}) and (\ref{C*}), we deduce that
 $$S(T)y(t) \rightarrow 0 \; \mbox{in}\; H, \; \mbox{as}\;  t\to +\infty\cdot
 $$
Now, let $\epsilon>0$ and let $t_1>T$ be such that
$$\|S(T)y(t)\| <\epsilon,\; \forall t\ge t_1\cdot$$
Implementing this in the following variation of constants formula
 $$
 \begin{array}{lll}
   y(t+T)&= & S(T)y(t) + \int_{t}^{t+T} v(s) S(t+T-s)y(s) ds \\
   \\
    & = &S(T)y(t) + \int_{t-T}^t v(s+T) S(t-s) B y(s+T) ds, \,t\ge t_1,
 \end{array}
$$
it comes
$$
\|y(t+T)\|  \le \epsilon + \|B\|^2 \int_{t-T}^t  \|y(s+T)\| ds, ,\; \forall t\ge t_1\cdot
$$
Hence Gronwall yields
$$
\|y(t+T)\|\le e^{T  \|B\|^2 }  \epsilon,\, \forall t\ge t_1,
$$
which completes the proof.
\end{proof}

\begin{rem}
Note that if (\ref{coer-par}) holds with the norm of $H$, then one gets the convergence in $H$ for all $y_0\in H.$
\end{rem}

\section{Some applications}
Here we give some applications of Theorems \ref{stabp0} $\&$ \ref{stabp2}.

\subsection{Polynomial stabilization of bilinear coupled wave equations}

We consider the two following initial and boundary coupled problems:

\begin{equation}\label{dampedwbisbisc}
\left\{
\begin{array}{llll}
u_{tt} - \Delta u + \left(\int_\Omega a(x) \, \left|u_t \right|^2 \, dx \right) \, a(x) \,u_t + \beta\, v = 0, \, \quad &\mbox{in }& \Omega \times
(0, + \infty), \\
v_{tt} - \Delta v + \beta\, u = 0, \, \quad &\mbox{in }&  \Omega \times
(0, + \infty), \\
u = v = 0 \,, \quad &\mbox{on} \,& \partial \Omega \times (0,+ \infty),\\
u(x,0) = u_0(x), \, u_t(x,0) = u_1(x) \,,  \quad &\mbox{in }& \Omega,\\
v(x,0) = v_0(x), \, v_t(x,0) = v_1(x) \,,  \quad &\mbox{in }& \Omega,
\end{array}
\right.
\end{equation}

and

\begin{equation}\label{dampedwc}
\left\{
\begin{array}{llll}
u_{tt} - \Delta u + \frac{\left(\int_\Omega a(x) \, \left|u_t \right|^2 \, dx \right) \, a(x) \,u_t}
{\|(u(t),u_t(t),v(t),v_t(t))\|^2_{\left(H^1_0 (\Omega) \times L^2(\Omega)\right)^2}}
{\bf 1}_{E_1} + \beta \, v = 0, \, \quad &\mbox{in }& \Omega \times
(0, + \infty), \\
v_{tt} - \Delta v + \beta \, u = 0, \, \quad &\mbox{in }& \Omega \times
(0, + \infty), \\
u = v = 0 \,, \quad &\mbox{on }& \, \partial \Omega \times (0,+ \infty),\\
u(x,0) = u_0(x), \, u_t(x,0) = u_1(x) \,, \quad &\mbox{in }& \Omega, \\
v(x,0) = v_0(x), \, v_t(x,0) = v_1(x) \,,  \quad &\mbox{in }& \Omega\cdot
\end{array}
\right.
\end{equation}
where $a \in L^\infty(\Omega), a \geq 0, \beta \in \mathbb{R}^*$,
$\Omega$ is a bounded open set of $\mathbb{R}^n$ of class $\mathcal{C}^2$ and where ${\bf 1}_{E_1}$ is the characteristic function of the set $E_1:= \{t\ge 0:\; (u(t),u_t(t),v(t),v_t(t)) \ne (0,0,0,0)\}.$
\medskip

Here, we have:
$$
A = \left(
\begin{array}{llcc}
0 & I & 0 & 0 \\
\Delta  & 0 & - \beta & 0 \\
0 & 0 & 0 & I \\
- \beta & 0 & \Delta & 0
\end{array}
\right);\, D(A) = \left(\left[H^2(\Omega) \cap H^1_0 (\Omega)\right] \times H^1_0(\Omega) \right)^2 \subset H = \left[H^1_0 (\Omega) \times L^2(\Omega) \right]^2 \rightarrow H,
$$
which is  a skew-adjoint operator satisfying the assumptions $(i)-(ii)$ of Theorems \ref{stabp0} $\&$ \ref{stabp2}, and
$
B = \left(
\begin{array}{llcc}
0 & 0 & 0 & 0 \\
0  & a & 0 & 0 \\
0 & 0 & 0 & 0 \\
0 & 0 & 0 & 0
\end{array}
\right)  \in \mathcal{L}(H).$\\
Moreover, the corresponding linear equation becomes in this case:
\begin{equation}\label{conservativepbc}
\begin{cases}
\phi_{tt} -  \Delta \phi + \beta \psi = 0 \,,
\quad &\mbox{in }\, \Omega \times (0,+\infty)\,,\\
\psi_{tt} -  \Delta \psi + \beta \phi = 0 \,,
\quad &\mbox{in }\,  \Omega \times (0,+\infty)\,,\\
\phi= \psi = 0,  \quad  &\mbox{on }\; \partial \Omega \times (0,+\infty), \\
\phi(x, 0) = \phi_0(x), \phi_t(x, 0) = \phi_1(x),
\quad &\mbox{in } \; \Omega, \\
\psi(x, 0) = \psi_0(x), \psi_t(x, 0) = \psi_1(x),
\quad &\mbox{in }\;  \Omega\cdot
\end{cases}
\end{equation}
According to \cite{alabau,wehbe} we show that the observability inequality is given by
\begin{prop}
There exist $T, \beta_0 > 0$ and $c_T >0$ such that for all $0 <|\beta| <\beta_0$, the following observability inequality holds:
\begin{equation}\label{obsc}
||({\phi}_0,{\phi}_1,\psi_0,\psi_1)||^2_{\left[L^2(\Omega) \times
H^{-1}(\Omega)\right]^2} \leq \int_0^T \int_\Omega a \, |\phi_t|^2  \, dx \,dt \,, \quad
\end{equation}
for all initial data
$(\phi_0,\phi_1,\psi_0,\psi_1) \in \left[H^1_0(\Omega) \times L^2 (\Omega)\right]^2.$
\end{prop}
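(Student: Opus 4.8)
The plan is to derive the indirect observability inequality (\ref{obsc}) from the observability of a single (Klein--Gordon) wave equation, exploiting that both components of (\ref{conservativepbc}) carry the same Dirichlet Laplacian. Setting $w^{\pm}=\phi\pm\psi$ diagonalizes the system: adding and subtracting the two equations gives
$$w^{\pm}_{tt}-\Delta w^{\pm}\pm\beta\,w^{\pm}=0\ \ \text{in }\Omega\times(0,+\infty),\qquad w^{\pm}=0\ \text{on }\partial\Omega,$$
and the map $(\phi,\psi)\mapsto(w^{+},w^{-})$ is an isomorphism with norms equivalent, uniformly for $|\beta|<\lambda_1$ (the first Dirichlet eigenvalue of $-\Delta$), so that it suffices to establish the corresponding inequality in the variables $w^{\pm}$. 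The observed quantity becomes $\phi_t=\tfrac12(w^{+}_t+w^{-}_t)$.

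First I would record the observability of each decoupled equation. Since $\pm\beta\,w^{\pm}$ is a zeroth-order term, for $|\beta|<\beta_0$ small the Klein--Gordon observability follows from the classical wave observability on a subdomain $\omega=\{a\ge a_0>0\}$ satisfying the geometric control condition, via a compactness--uniqueness perturbation argument; this yields a time $T$ and a constant, uniform in $\beta$, with
$$\|(w^{\pm}_0,w^{\pm}_1)\|^2_{H^1_0\times L^2}\le c\int_0^T\!\!\int_\Omega a\,|w^{\pm}_t|^2\,dx\,dt.$$
This already controls $(\phi_0,\phi_1)$ at the energy level, hence a fortiori at the weaker $L^2\times H^{-1}$ level appearing in (\ref{obsc}).

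The essential and hardest point is the \emph{simultaneous} observability needed to separate the two branches: we observe only the sum $w^{+}_t+w^{-}_t$, whereas recovering $\psi=\tfrac12(w^{+}-w^{-})$ requires distinguishing the two solutions. The branches oscillate at the close frequencies $\sqrt{\lambda_j\pm\beta}$, whose gap $\sqrt{\lambda_j+\beta}-\sqrt{\lambda_j-\beta}\sim\beta\lambda_j^{-1/2}$ decays as $j\to\infty$; it is exactly this shrinking beat that forces the one-derivative loss, so that $\psi$ is recovered only in $L^2\times H^{-1}$ rather than in the energy space. I would handle this by the two-level energy method of \cite{alabau,wehbe}: multiplying the $\psi$-equation of (\ref{conservativepbc}) by $(-\Delta)^{-1}\psi_t$ produces the weak energy $\tfrac12(\|\psi\|^2_{L^2}+\|\psi_t\|^2_{H^{-1}})$, whose time derivative equals $-\beta\langle\phi,(-\Delta)^{-1}\psi_t\rangle$ and is therefore driven purely by the coupling; combining this lower-level identity with the energy-level observation of $\phi$ and absorbing the $\psi$ cross terms yields the bound on $\|(\psi_0,\psi_1)\|_{L^2\times H^{-1}}$.

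The delicate bookkeeping is the behaviour in $\beta$. One must organize the indirect estimate so that it never divides by $\beta$ (the two-level method recovers an energy of $\psi$, not $\psi$ reconstructed through the coupling), and choose $\beta_0<\lambda_1$ so that the shifted frequencies avoid resonances and a single observation time $T$ serves for all $0<|\beta|<\beta_0$; the residual $\beta$-dependence of the constant is precisely the $\beta^{-2}$ coming from the beat $\delta_j\sim\beta\lambda_j^{-1/2}$. Adding the direct estimate for $\phi$ to the indirect estimate for $\psi$ and returning to the original variables gives (\ref{obsc}).
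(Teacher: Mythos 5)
The paper itself gives no proof of this proposition: it simply invokes \cite{alabau,wehbe}, and the inequality is the internal-observability analogue of the two-level energy estimates proved there. Your overall strategy --- direct (energy-level) estimate for the observed component, indirect (weakened, $L^2\times H^{-1}$) estimate for the unobserved component via the coupling, with the weak-energy identity $\frac{d}{dt}\,\frac12\left(\|\psi\|^2_{L^2}+\|\psi_t\|^2_{H^{-1}}\right)=-\beta\langle\phi,(-\Delta)^{-1}\psi_t\rangle$ as the engine --- is exactly the method of the cited references, and your spectral remark (frequencies $\sqrt{\lambda_j\pm\beta}$ with gap $\sim\beta\lambda_j^{-1/2}$) is a correct explanation of why one derivative must be lost on $\psi$. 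The identity you compute is right, and the requirement $|\beta|<\lambda_1$ to keep both branches hyperbolic and the norms equivalent is the right normalization.

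There is, however, one genuine logical gap in your first step. From the decoupled observability inequalities $\|(w^{\pm}_0,w^{\pm}_1)\|^2_{H^1_0\times L^2}\le c\int_0^T\int_\Omega a\,|w^{\pm}_t|^2$ you conclude that $(\phi_0,\phi_1)$ is ``already'' controlled at the energy level. This does not follow: the available observation is $a\,\phi_t=\tfrac{a}{2}(w^{+}_t+w^{-}_t)$, and neither $\int a|w^{+}_t|^2$ nor $\int a|w^{-}_t|^2$ is bounded by $\int a|w^{+}_t+w^{-}_t|^2$ (the two branches can cancel on $\omega$ over any finite time window --- this is precisely the simultaneous-observability obstruction you yourself identify in the next paragraph, and it affects the sum $\phi$ just as much as the difference $\psi$). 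The correct route to the direct estimate is to treat the $\phi$-equation as a free wave equation with source $-\beta\psi$ and perturb the classical observability inequality; this yields $\|(\phi_0,\phi_1)\|^2_{H^1_0\times L^2}\le C\int_0^T\int_\Omega a|\phi_t|^2+C\beta^2\|(\psi_0,\psi_1)\|^2_{H^1_0\times L^2}$ after absorbing the $\phi$-remainder for small $\beta$, and the leftover $\beta^2$-term in the energy of $\psi$ must then be handled together with the indirect estimate (in \cite{alabau,wehbe} this is where the smallness of $\beta_0$ and the length of $T$ really enter). Secondly, the passage from the weak-energy identity to an actual lower bound on $\|(\psi_0,\psi_1)\|^2_{L^2\times H^{-1}}$ --- the integrated cross-multiplier identities and the absorption of the terms $\int_0^T\langle\phi,\psi\rangle$ --- is the entire technical content of the two-level method and is only asserted in your sketch; since the paper defers wholly to the references for this, that is acceptable as an outline, but the first-step claim as written is false and needs to be replaced by the perturbation argument above.
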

We remark here that we have \eqref{obss} for $L = \left[L^2(\Omega) \times
H^{-1}(\Omega)\right]^2$ and $\theta = 1/2.$

\medskip

Thus according to Theorems \ref{stabp0} $\&$ \ref{stabp2}, we have the following stabilization result for the bilinear wave equation.

\begin{thm}
We suppose that $supp \, (a)$ satisfies a Lions Geometric Control Condition (LGCC), as in \cite{wehbe}. Then, we have:
\begin{enumerate}
\item
The energy of  (\ref{dampedwc}) satisfies the estimate:
\begin{equation}\label{general-decaybisc}
E(t) := \frac{1}{2} \|(u(t),u_t(t),v(t),v_t(t)\|^2_{H} \le  \frac{C}{t} \, \|(u_0,u_1,v_0,v_1)\|_{D(A)}^2, \forall \, t > 0,
\end{equation}
and for all initial data $(u_0,u_1,v_0,v_1) \in \left(\left[H^2(\Omega) \cap H^1_0(\Omega)\right] \times H^1_0 (\Omega)\right)^2$.
\item
The energy of  (\ref{dampedwbisbisc}) satisfies the estimate:
\begin{equation}\label{general-decaybisbisc}
E(t) := \frac{1}{2} \|(u(t),u_t(t),v(t),v_t(t)\|^2_{H} = O
\left(\frac{1}{t^{1/3}}\right) \, \mbox{as}\; t \rightarrow + \infty.
\end{equation}
\end{enumerate}
\end{thm}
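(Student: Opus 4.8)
The plan is to recognize the two systems (\ref{dampedwbisbisc}) and (\ref{dampedwc}) as the closed-loop systems obtained from (\ref{SS}) under the feedbacks $p_0$ and $p_2$, and then to read off the decay rates from Theorems \ref{stabp0} and \ref{stabp2} with $\theta=1/2$. Writing $y=(u,u_t,v,v_t)$, the operator $B$ acts by $By=(0,au_t,0,0)$, so $\langle y,By\rangle=\int_\Omega a|u_t|^2\,dx\ge0$ and $F_0(y)=-\langle y,By\rangle\,By=-\bigl(\int_\Omega a|u_t|^2\,dx\bigr)(0,au_t,0,0)$. Matching the nonlinear terms, (\ref{dampedwbisbisc}) is exactly the $p_0$-loop (\ref{cloosed}) and (\ref{dampedwc}) is exactly the $p_2$-loop (\ref{cloosed2}). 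Since $A$ is skew-adjoint it generates a unitary, hence contractive, group, and $B\in\mathcal{L}(H)$, so hypotheses (i)--(ii) of both theorems hold.

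Next I would fix the interpolation data by taking $K=D(A)$ (with graph norm, so that $\|\cdot\|_{D(A)}\sim\|\cdot\|_K$), $L=[L^2(\Omega)\times H^{-1}(\Omega)]^2$ and $\theta=1/2$. The inclusions $H\subset L$ and $D(A)\subset K\subset H$ are immediate from $H^1_0\subset L^2\subset H^{-1}$. The interpolation identity $[K,L]_{1/2}=H$ reduces, componentwise, to the classical facts $[H^2\cap H^1_0,L^2]_{1/2}=H^1_0$ and $[H^1_0,H^{-1}]_{1/2}=L^2$ (both read off from fractional powers of the Dirichlet Laplacian), and it supplies the interpolation inequality (\ref{interpolation}) for $\theta=1/2$. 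This is the step demanding the most care, as one must justify interpolating the coupled, vector-valued spaces and, in particular, handle the dual space $H^{-1}$ on the low-regularity side.

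I would then translate the observability Proposition into the abstract form (\ref{obss}). For $z=(\phi_0,\phi_1,\psi_0,\psi_1)$ one has $S(t)z=(\phi,\phi_t,\psi,\psi_t)$ and $\langle BS(t)z,S(t)z\rangle=\int_\Omega a|\phi_t|^2\,dx\ge0$, whence $\int_0^T|\langle BS(t)z,S(t)z\rangle|\,dt=\int_0^T\!\!\int_\Omega a|\phi_t|^2\,dx\,dt$. Under the LGCC on $\mathrm{supp}(a)$ the inequality (\ref{obsc}) yields $\int_0^T\!\!\int_\Omega a|\phi_t|^2\,dx\,dt\ge\|z\|_L^2$, which is precisely (\ref{obss}) with $\delta=1$ and the above $L$, for every $z\in H$ and a fortiori for $z\in D(A)$.

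Finally, with (\ref{obss}) in hand and $\theta=1/2$, the conclusions specialize directly: Theorem \ref{stabp2}(1) gives the rate $t^{-\theta/(1-\theta)}=t^{-1}$, that is (\ref{general-decaybisc}) for the $p_2$-system (\ref{dampedwc}); Theorem \ref{stabp0}(1) gives $t^{-\theta/(2-\theta)}=t^{-1/3}$, that is (\ref{general-decaybisbisc}) for the $p_0$-system (\ref{dampedwbisbisc}). Apart from the interpolation identity of the second paragraph, the proof is a substitution of $\theta=1/2$ into the abstract estimates.
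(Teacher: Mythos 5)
Your proposal is correct and follows exactly the route the paper takes: the paper simply notes that the observability inequality \eqref{obsc} gives \eqref{obss} with $L=\left[L^2(\Omega)\times H^{-1}(\Omega)\right]^2$ and $\theta=1/2$, and then invokes Theorems \ref{stabp0} and \ref{stabp2}, whose exponents $\theta/(2-\theta)=1/3$ and $\theta/(1-\theta)=1$ yield the two stated rates. Your write-up is in fact more careful than the paper's (which omits the identification of the closed loops with $F_0$, $F_2$ and the componentwise interpolation $[H^2\cap H^1_0,L^2]_{1/2}=H^1_0$, $[H^1_0,H^{-1}]_{1/2}=L^2$), so there is nothing to correct.
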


\subsection{Weak stabilization of bilinear wave equation}
We consider the following initial and boundary problem:
\begin{equation}\label{dampedw}
\left\{
\begin{array}{llll}
u_{tt} - \Delta u + \frac{\left(\int_\Omega a(x) \, \left|u_t \right|^2 \, dx \right) \, a(x) \,u_t}{\|(u(t),u_t(t))\|^2_{H^1_0 (\Omega) \times L^2(\Omega)}}  {\bf 1}_{\{t\ge 0:\; (u(t),u_t(t)) \ne (0,0)\}}= 0, \, \quad &\mbox{in }&   \Omega \times
(0, + \infty), \\
u = 0 \,, \quad &\mbox{on }& \, \partial \Omega \times (0,+ \infty),\\
u(x,0) = u_0(x), \, u_t(x,0) = u_1(x) \,,  \quad &\mbox{in }& \Omega,
\end{array}
\right.
\end{equation}
and
\begin{equation}\label{dampedwbisbis}
\left\{
\begin{array}{llll}
u_{tt} - \Delta u + \left(\int_\Omega a(x) \, \left|u_t \right|^2 \, dx \right) \, a(x) \,u_t = 0, \, \quad &\mbox{in }& \Omega \times
(0, + \infty), \\
u = 0 \,, \quad &\mbox{on }&\, \partial \Omega \times (0,+ \infty),\\
u(x,0) = u_0(x), \, u_t(x,0) = u_1(x) \,,  \quad &\mbox{in }& \Omega,
\end{array}
\right.
\end{equation}
where $a \in L^\infty(\Omega), a \geq 0$ and
$\Omega$ is a bounded open set of $\mathbb{R}^n$ of class $\mathcal{C}^2$.

\medskip

In this case, we have:
$$
A = \left(
\begin{array}{llcc}
0 & I \\
\Delta  & 0
\end{array}
\right) : D(A) = \left[H^2(\Omega) \cap H^1_0 (\Omega)\right] \times H^1_0(\Omega) \subset H = H^1_0 (\Omega) \times L^2(\Omega) \rightarrow H^1_0 (\Omega) \times L^2(\Omega),
$$
and $A$ is a skew-adjoint operator satisfying $(i)-(ii)$ of Theorems \ref{stabp0}  and \ref{stabp2}, and we have
$$B = \left(
\begin{array}{llcc}
0 & 0 \\
0  & a
\end{array}
\right)  \in \mathcal{L}(H).$$

The corresponding linear equation is:

\begin{equation}\label{conservativepb}
\left\{
\begin{array}{llll}
\phi_{tt} -  \Delta \phi = 0 \,,
\quad &\mbox{in }&  \Omega \times (0,+\infty)\,,\\
\phi=0,  \quad &\mbox{in }&  \partial \Omega \times (0,+\infty), \\
\phi(x, 0) = \phi_0(x), \phi_t(x, 0) = \phi_1(x),
\quad  &\mbox{in }& \Omega.
\end{array}
\right.
\end{equation}

According to \cite{math,phunghdr} we show that the observability inequality is given by
\begin{prop}
There exists $T$ and $c_T >0$ such that the following observability inequality holds:
\begin{eqnarray}\label{obs}
&&||({\phi}_0,{\phi}_1)||^2_{\left[H^2(\Omega) \cap H^1_0(\Omega) \right] \times H^1_0(\Omega)} \, \exp {\left[ - c_T \,
\frac{||({\phi}_0,{\phi}_1)||_{\left[H^2(\Omega) \cap H^1_0(\Omega) \right] \times H^1_0 (\Omega)}}{||({\phi}_0,{\phi}_1)||_{H^1_0(\Omega)\times L^2 (\Omega)}} \right]} \nonumber \\
&&  \hspace{1cm}\leq\int_0^T \int_\Omega a \, |\phi_t|^2  \, dx \,dt \,, \quad
\end{eqnarray}
for all non-identically zero initial data
$(\phi_0,\phi_1) \in \left[H^2(\Omega) \cap H^1_0(\Omega) \right] \times H^1_0 (\Omega).$
\end{prop}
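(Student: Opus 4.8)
The plan is to read (\ref{obs}) as the weak, logarithmic-type observability estimate for the conservative wave equation (\ref{conservativepb}) --- valid \emph{without} any geometric control condition --- and to obtain it from the interpolation (Carleman/spectral) inequalities of \cite{math,phunghdr}. First I would record that, for a solution $\phi$ of (\ref{conservativepb}) issued from $z=(\phi_0,\phi_1)$, one has $\langle BS(t)z,S(t)z\rangle=\int_\Omega a\,|\phi_t|^2\,dx$, so that the right-hand side of (\ref{obs}) equals $\int_0^T|\langle BS(t)z,S(t)z\rangle|\,dt$. Hence (\ref{obs}) is exactly the instance of the abstract assumption (\ref{obsp0}) required by the application, with $K=D(A)=[H^2(\Omega)\cap H^1_0(\Omega)]\times H^1_0(\Omega)$, $H=H^1_0(\Omega)\times L^2(\Omega)$, and weight $\mathcal H(x)=\exp(-c_T/\sqrt{x})$ read at $x=\|z\|^2/\|z\|_K^2$.

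Set $N_0=\|z\|_H$ and $N_1=\|z\|_{D(A)}$. The analytic heart is a frequency-truncated observability inequality of the following type: there exist $C,c,T>0$ such that for every $\mu>0$,
\[
N_0^2\ \le\ C\,e^{c\mu}\int_0^T\!\!\int_\Omega a\,|\phi_t|^2\,dx\,dt\ +\ \frac{C}{\mu^2}\,N_1^2 .
\]
This is the form in which the estimates of \cite{math,phunghdr} naturally enter: the Dirichlet frequencies $\le\mu$ are observed through $\{a>0\}$ at exponential cost $e^{c\mu}$ (Lebeau--Robbiano spectral inequality), while the frequencies $>\mu$ carry $H$-energy at most $\mu^{-1}N_1$ and hence contribute the term $\mu^{-2}N_1^2$. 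I would invoke this rather than reprove it, as it is precisely the content of the cited references.

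Granting the displayed inequality, the remainder is an optimization in $\mu$. Since $N_1\ge c_0N_0$ (Poincar\'e), the choice $\mu=\sqrt{2C}\,N_1/N_0$ is admissible ($\mu\ge c_0\sqrt{2C}>0$) and absorbs the last term into $\frac{1}{2}N_0^2$, leaving
\[
\int_0^T\!\!\int_\Omega a\,|\phi_t|^2\,dx\,dt\ \ge\ \frac{1}{2C}\,N_0^2\,e^{-c' N_1/N_0},\qquad c'=c\sqrt{2C}.
\]
It remains to trade the prefactor $N_0^2$ for the desired $N_1^2$. Writing $\rho=N_1/N_0\ge c_0$, I would use the elementary bound $\rho^2 e^{-(c_T-c')\rho}\le (2C)^{-1}$, valid for all $\rho>0$ once $c_T-c'\ge 2\sqrt{2C}/e$ (because $\max_{\rho>0}\rho^2 e^{-a\rho}=4/(a^2e^2)$ with $a=c_T-c'$), to get $N_1^2 e^{-c_T N_1/N_0}\le \frac{1}{2C}N_0^2 e^{-c' N_1/N_0}$, which is bounded above by $\int_0^T\!\int_\Omega a|\phi_t|^2$; this is (\ref{obs}).

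The main obstacle is entirely the interpolation step of the second paragraph. Producing the exponential spectral inequality for the Dirichlet Laplacian and converting it into the frequency-truncated observability for the wave group is the hard, geometry-independent analysis underlying weak observability; it is what \cite{math,phunghdr} provide, and no geometric control assumption on the support of $a$ is needed, only that it contains a nonempty open set. Once that inequality is in hand, the passage to the weighted form (\ref{obs}) is the two-line optimization above.
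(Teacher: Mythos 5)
Your argument is correct, and it is essentially the route the paper intends: the paper offers no proof of this Proposition beyond the citation of \cite{math,phunghdr}, and your derivation --- frequency-truncated observability with exponential cost $e^{c\mu}$ on low frequencies plus the $\mu^{-2}\|z\|_{D(A)}^2$ high-frequency remainder, followed by optimization in $\mu$ and the elementary bound $\rho^2e^{-(c_T-c')\rho}\le(2C)^{-1}$ to trade $\|z\|_H^2$ for $\|z\|_{D(A)}^2$ --- is precisely the standard way the quotient-weighted estimate of Laurent--L\'eautaud/Phung is obtained and converted to the form \eqref{obs}. The only caveat worth recording is that invoking those references requires $a$ to be bounded below by a positive constant on some nonempty open set (so that $\int_0^T\int_\Omega a|\phi_t|^2$ controls $\|\phi_t\|^2_{L^2((0,T)\times\omega)}$), a hypothesis the paper states only as $\mathrm{meas}(\mathrm{supp}\,a)\neq 0$.
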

We remark here that we have \eqref{obsp0} for ${\mathcal H}(x) = \exp(- \frac{c_T}{x^{1/2}}), \, \forall \, x > 0.$

\medskip

Thus according to Theorems \ref{stabp0}  and \ref{stabp2} we have the following stabilization result for the bilinear wave equation.

\begin{thm}
We suppose that $meas(supp \, (a)) \neq 0$. We have:
\begin{enumerate}
\item
The energy of  (\ref{dampedw}) satisfies the estimate:
\begin{equation}\label{general-decaybis}
E(t) := \frac{1}{2} \|(u(t),u_t(t)\|^2_{H} \le  \frac{C}{(\ln (1 + t))^{2}} \, \|(u_0,u_1)\|_{D(A)}^2, \forall \, t > 0,
\end{equation}
and for all initial data $(u_0,u_1) \in \left[H^2(\Omega) \cap H^1_0(\Omega)\right] \times H^1_0 (\Omega)$.
\item For all initial data $(u_0,u_1) \in \left[H^2(\Omega) \cap H^1_0(\Omega)\right] \times H^1_0 (\Omega), $ the energy of  (\ref{dampedwbisbis}) satisfies the estimate:
\begin{equation}\label{general-decaybisbis}
E(t) := \frac{1}{2} \|(u(t),u_t(t)\|^2_{H} = O
\left(\frac{1}{(\ln (t))^{2}}\right), \, t \rightarrow + \infty\cdot
\end{equation}
\end{enumerate}
\end{thm}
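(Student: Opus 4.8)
The plan is to deduce both estimates directly from Theorems \ref{stabp0} and \ref{stabp2} by recognizing the conservative observability inequality (\ref{obs}) as an instance of the abstract hypothesis (\ref{obsp0}). First I would fix the functional setting: take $H = H^1_0(\Omega)\times L^2(\Omega)$ and $K = D(A) = [H^2(\Omega)\cap H^1_0(\Omega)]\times H^1_0(\Omega)$, so that $\|\cdot\|_K\sim\|\cdot\|_{D(A)}$ as required, and note that the skew-adjointness of $A$ together with the boundedness of $B$ already secures hypotheses (i)--(ii) of both theorems. Writing $z=(\phi_0,\phi_1)$ and $S(t)z=(\phi(t),\phi_t(t))$ for the solution of the conservative system (\ref{conservativepb}), the identity $\langle BS(t)z,S(t)z\rangle = \int_\Omega a\,|\phi_t|^2\,dx$ (with $a\ge0$, so the absolute value is automatic) lets me rewrite the right-hand side of (\ref{obs}) as $\int_0^T|\langle BS(t)z,S(t)z\rangle|\,dt$. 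Its left-hand side is $\|z\|_K^2\,\exp(-c_T\|z\|_K/\|z\|)$, which is exactly $\|z\|_K^2\,\mathcal{H}(\|z\|^2/\|z\|_K^2)$ for $\mathcal{H}(x)=\exp(-c_T/\sqrt{x})$; hence (\ref{obsp0}) holds with this $\mathcal{H}$, the inequality itself being guaranteed under $\mathrm{meas}(\mathrm{supp}\,a)\ne0$ by the preceding Proposition.

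Second, I would treat equation (\ref{dampedwbisbis}), whose closed loop is driven by the unnormalized feedback $F_0$, by invoking part (2) of Theorem \ref{stabp0}. This requires that $\mathcal{K}(x)=x\mathcal{H}(x)=x\exp(-c_T/\sqrt{x})$ be invertible on $\mathbb{R}_+$, which is immediate since $(\ln\mathcal{K})'(x)=1/x+\tfrac{c_T}{2}x^{-3/2}>0$ and $\mathcal{K}$ ranges continuously over $(0,\infty)$. The theorem then yields $E(t)=O(\mathcal{K}^{-1}(1/t))$, so it remains to extract the asymptotics of $\mathcal{K}^{-1}$. Solving $x\exp(-c_T/\sqrt{x})=s$ and taking logarithms gives $-c_T/\sqrt{x}=\ln s-\ln x$; since $\ln x$ is of lower order as $s\to0^+$, one obtains $\sqrt{x}\sim c_T/\ln(1/s)$, i.e. $\mathcal{K}^{-1}(s)\sim c_T^2/(\ln(1/s))^2$. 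With $s=1/t$ this gives $E(t)=O(1/(\ln t)^2)$, which is (\ref{general-decaybisbis}).

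Third, for equation (\ref{dampedw}), whose closed loop uses the normalized feedback $F_2$, I would apply part (2) of Theorem \ref{stabp2}. Here I must verify the two structural hypotheses: that $\mathcal{H}$ is invertible (clear, being strictly increasing from $(0,\infty)$ onto $(0,1)$) and that $x\mapsto \mathcal{H}^2(x)/x$ is increasing on the relevant interval. A direct computation of $\bigl(\ln(\mathcal{H}^2(x)/x)\bigr)' = x^{-3/2}(c_T-\sqrt{x})$ shows this derivative is positive whenever $x<c_T^2$, so the monotonicity holds on $(0,1)$ provided $c_T\ge1$ (the constant in (\ref{obs}) may be enlarged to arrange this). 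The theorem then gives $E(t)\le C\,\mathcal{H}^{-1}(1/t)\,\|y_0\|_{D(A)}^2$, and inverting $\mathcal{H}$ exactly yields $\mathcal{H}^{-1}(s)=c_T^2/(\ln(1/s))^2$, whence the estimate $E(t)\le C(\ln(1+t))^{-2}\|y_0\|_{D(A)}^2$ of (\ref{general-decaybis}), the shift $1+t$ merely making the bound valid for all $t>0$.

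The routine parts are the identification of $\mathcal{H}$ and the verification of hypotheses (i)--(ii). The step I expect to be most delicate is the asymptotic inversion of the logarithmically decaying profiles, especially justifying that the extra factor $x$ in $\mathcal{K}(x)=x\mathcal{H}(x)$ does not affect the leading order, so that $\mathcal{K}^{-1}(s)$ and $\mathcal{H}^{-1}(s)$ share the same $c_T^2/(\ln(1/s))^2$ behaviour; controlling the lower-order $\ln x$ correction rigorously is where the care is needed.
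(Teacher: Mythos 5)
Your proposal is correct and follows exactly the route the paper intends: the paper offers no separate proof of this theorem, merely identifying $\mathcal{H}(x)=\exp(-c_T/\sqrt{x})$ in \eqref{obsp0} and citing Theorems \ref{stabp0} and \ref{stabp2}. You supply the details the paper leaves implicit (the verification that $x\mapsto \mathcal{H}^2(x)/x$ is increasing for $c_T$ large, and the asymptotic inversion of $\mathcal{K}$ and $\mathcal{H}$ yielding the $(\ln t)^{-2}$ rate), all of which check out.
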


\subsection{Weak stabilization of bilinear Schr\"odinger equation}
We consider the following initial and boundary problem:
\begin{equation}\label{dampedsh}
\left\{
\begin{array}{llll}
u_{t} - i \Delta u + \frac{\left(\int_\Omega a(x) \, \left|u \right|^2 \, dx \right) \, a(x) \,u}{\|u(t)\|^2_{L^2(\Omega)}}  {\bf 1}_{\{t\ge 0:\; u(t) \ne 0\}}= 0, \, \quad &\mbox{in }& \Omega \times
(0, + \infty), \\
u = 0 \,, \quad &\mbox{on }&\, \partial \Omega \times (0,+ \infty),\\
u(x,0) = u_0(x) \,,  \quad &\mbox{in }& \Omega\cdot
\end{array}
\right.
\end{equation}
and
\begin{equation}\label{dampedshbis}
\left\{
\begin{array}{llll}
u_{t} - i \Delta u + \left(\int_\Omega a(x) \, \left|u \right|^2 \, dx \right) \, a(x) \,u \, = 0, \, \quad &\mbox{in }& \Omega \times
(0, + \infty), \\
u = 0 \,, \quad &\mbox{in }&\, \partial \Omega \times (0,+ \infty),\\
u(x,0) = u_0(x) \,,  \quad &\mbox{in }& \Omega,
\end{array}
\right.
\end{equation}
where $a \in L^\infty(\Omega), a \geq 0$ and
$\Omega$ is a bounded open set of $\mathbb{R}^n$ of class $\mathcal{C}^2$.

\medskip

In this case, we have:
$$
A = i \Delta : D(A) = H^2(\Omega) \cap H^1_0 (\Omega) \subset H = L^2(\Omega) \rightarrow  L^2(\Omega),
$$
and $A$ is a skew-adjoint operator satisfying assumptions $(i)-(ii)$ of Theorems \ref{stabp0} and \ref{stabp2}, and we have
$B = a \, I\!d \in \mathcal{L}(H).$

\medskip

Moreover the linear equation becomes in this case:

\begin{equation}\label{conservativepbsh}
\left\{
\begin{array}{llll}
\phi_{t} -  i \Delta \phi = 0 \,,
\quad &\mbox{in }& \Omega \times (0,+\infty)\,,\\
\phi=0,  \quad & \mbox{on }&  \partial \Omega \times (0,+\infty), \\
\phi(x, 0) = \phi_0(x),  \quad & \mbox{in }& \Omega.
\end{array}
\right.
\end{equation}
From  \cite{math,phunghdr},  we can show that the following observability inequality:
\begin{prop}
There exists $T$ and $c_T >0$ such that the following observability inequality holds:
\begin{eqnarray}\label{obsshw}
&&||\phi_0||^2_{H^2(\Omega) \cap H^1_0(\Omega)} \, \exp {\left[ - c_T \,
\frac{||\phi_0||_{H^2(\Omega) \cap H^1_0(\Omega)}}{||{\phi}_0||_{L^2 (\Omega)}} \right]} \nonumber \\
&&  \hspace{1cm}\leq\int_0^T \int_\Omega a \, |\phi|^2  \, dx \,dt \,, \quad
\end{eqnarray}
for all non-identically zero initial data
$\phi_0 \in H^2(\Omega) \cap H^1_0(\Omega).$
\end{prop}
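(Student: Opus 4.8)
The plan is to deduce \eqref{obsshw} from a quantified unique continuation (interpolation) inequality for the conservative equation \eqref{conservativepbsh}, following \cite{phunghdr,math}, and then to optimize a free scale parameter against the ratio of the strong and weak norms of the datum. First I would reduce the weighted observation to a genuine subdomain: since $a\in L^\infty(\Omega)$ is nonnegative and does not vanish identically, there exist $a_0>0$ and a measurable set $\omega\subset\Omega$ of positive measure with $a\ge a_0$ on $\omega$, so that $\int_0^T\int_\Omega a\,|\phi|^2\,dx\,dt\ge a_0\int_0^T\int_\omega|\phi|^2\,dx\,dt$, with $\phi=S(t)\phi_0$ and $\langle B S(t)\phi_0,S(t)\phi_0\rangle=\int_\Omega a\,|\phi(t)|^2\,dx$. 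It therefore suffices to bound the left-hand side of \eqref{obsshw} by $a_0^{-1}$ times an observation of $\phi$ on $\omega$.

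The core ingredient, which I would quote from \cite{phunghdr,math}, is a Carleman-type interpolation estimate for solutions of \eqref{conservativepbsh}: there exist $T>0$ and $C>0$ such that, for every $\varepsilon\in(0,1)$ and every $\phi_0\in D(A)$, one has $\|\phi_0\|_{L^2(\Omega)}^2\le e^{C/\varepsilon}\int_0^T\int_\omega|\phi|^2\,dx\,dt+\varepsilon^2\,\|\phi_0\|_{D(A)}^2$. The first term carries the exponentially large observation cost coming from the local unique continuation, while the second is the unavoidable high-frequency defect, controlled here by the $D(A)=H^2\cap H^1_0$ regularity of the datum (in the spectral picture it is the tail $\sum_{\lambda_j>\varepsilon^{-2}}|c_j|^2$). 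The passage from this two-term inequality to \eqref{obsshw} is then a pure optimization: choosing $\varepsilon$ comparable to $\|\phi_0\|_{L^2}/\|\phi_0\|_{D(A)}$ makes the defect term absorb at most half of $\|\phi_0\|_{L^2}^2$, and the surviving balance yields $\|\phi_0\|_{L^2}^2\le C\,\exp\!\big(c_T\,\|\phi_0\|_{D(A)}/\|\phi_0\|_{L^2}\big)\,a_0^{-1}\int_0^T\int_\omega|\phi|^2$. After rewriting and using that $\|\phi_0\|_{D(A)}$ dominates $\|\phi_0\|_{L^2}$, this is exactly \eqref{obsshw}, and reading it through $\mathcal H$ gives the weight $\mathcal H(x)=\exp(-c_T x^{-1/2})$ recorded in \eqref{obsp0}.

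The main obstacle is the interpolation estimate itself, not the optimization. Because $\omega$ is only of positive measure and no geometric control condition is imposed, finite-cost (exact) observability cannot hold, and one is forced into the weak regime where the observation constant blows up; quantifying this blow-up as a clean $e^{C/\varepsilon}$ loss with an $\varepsilon^2$ defect requires a global Carleman estimate for the Schr\"odinger operator $\partial_t-i\Delta$ (equivalently, an elliptic Carleman estimate in the $(t,x)$ variables after the standard augmentation), together with a Logvinenko--Sereda / propagation-of-smallness argument to upgrade the estimate from an open set to the merely measurable set of positive measure. I would track the exact powers of $\varepsilon$ with care, since they fix the exponent $1/2$ in the weight $\mathcal H$: a defect of order $\varepsilon$ rather than $\varepsilon^2$ would produce $\exp(-c_T x^{-1})$ instead, and hence alter the final decay rate obtained for \eqref{dampedsh} through Theorems \ref{stabp0} and \ref{stabp2}.
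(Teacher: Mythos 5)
The paper offers no proof of this proposition beyond the citation of \cite{math,phunghdr}, so there is no argument to compare yours against step by step. Your outline --- localize the observation to a set $\omega=\{a\ge a_0\}$ of positive measure, quote from those references a quantified unique-continuation inequality of the form $\|\phi_0\|_{L^2(\Omega)}^2\le e^{C/\varepsilon}\int_0^T\int_\omega|\phi|^2\,dx\,dt+\varepsilon^2\|\phi_0\|_{D(A)}^2$, and optimize in $\varepsilon$ --- is exactly the standard route to an estimate of the form \eqref{obsshw}, and your bookkeeping is right on the two points that matter: the power of $\varepsilon$ in the defect term is what fixes the exponent $1/2$ in $\mathcal H$, and the passage from $\|\phi_0\|_{L^2}^2$ to $\|\phi_0\|_{H^2\cap H^1_0}^2$ on the left-hand side costs only an enlargement of $c_T$, since the polynomial factor $(\|\phi_0\|_{D(A)}/\|\phi_0\|_{L^2})^2$ is absorbed by the exponential.

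The one step you should not treat as routine is the reduction to $\omega=\{a\ge a_0\}$. The interpolation and exponential-cost observability estimates in \cite{phunghdr} (and their wave analogues in \cite{math}) are established for observation on a nonempty \emph{open} subset of $\Omega$, via Carleman estimates; the hypothesis used in this application is only $meas(supp\,(a))\neq 0$, so the set where $a$ is bounded below may have empty interior. Upgrading such estimates from open sets to merely measurable sets of positive measure is not a direct Logvinenko--Sereda application for the Schr\"odinger group and is, in this generality, a genuinely delicate refinement rather than a footnote. Either the hypothesis should be read as requiring $a$ to be bounded below on some nonempty open set, or this step needs an actual argument; as written it is the only real gap in your proposal --- though it is a gap the paper itself shares, since the proposition is asserted under the same weak hypothesis with no proof supplied.
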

Here,  \eqref{obsp0} holds for ${\mathcal H}(x) = \exp(- \frac{c_T}{x^{1/2}}), \, \forall \, x > 0.$
We also notice  that the constant $c_T$ in (\ref{obsshw}) can be taken large enough, so that the function $\frac{1}{x} {\mathcal H}^2(x)$ is increasing on $(0,1)$.
\medskip

By applying   Theorems \ref{stabp0} and \ref{stabp2}, we obtain the following stabilization result for the bilinear Schr\"odinger equation.

\begin{thm}
We suppose that $meas(supp \, (a)) \neq 0$. We have:
\begin{enumerate}
\item
The energy of  (\ref{dampedsh}) satisfies the estimate:
\begin{equation}\label{general-decaybissh}
E(t) := \frac{1}{2} \|u(t)\|^2_{H} \le  \frac{C}{(\ln ( 1 + t))^{2}} \, \|u_0\|_{D(A)}^2, \forall \, t > 0,
\end{equation}
and for all initial data $u_0 \in H^2(\Omega) \cap H^1_0(\Omega)$.
\item
The energy of  (\ref{dampedshbis}) satisfies the estimate:
\begin{equation}\label{general-decaybisshbis}
E(t) := \frac{1}{2} \|u(t)\|^2_{H} = O \left(\frac{1}{(\ln (t))^{2}}\right), \,  t \rightarrow + \infty.
\end{equation}
\end{enumerate}
\end{thm}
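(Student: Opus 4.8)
The plan is to read the two Schr\"odinger problems \eqref{dampedsh} and \eqref{dampedshbis} as concrete realizations of the abstract closed-loop systems \eqref{cloosed2} and \eqref{cloosed}, and then to invoke the second assertions of Theorems~\ref{stabp2} and~\ref{stabp0} respectively. First I would fix the abstract data: with $H=L^2(\Omega)$, the operator $A=i\Delta$ on $D(A)=H^2(\Omega)\cap H^1_0(\Omega)$ is skew-adjoint, hence generates a unitary (in particular contraction) group, so hypothesis (i) is met; and $B=a\,\mathrm{Id}$ is bounded on $H$ because $a\in L^\infty(\Omega)$, giving (ii). Since $\langle Bu,u\rangle=\int_\Omega a\,|u|^2\,dx$, the feedback terms appearing in \eqref{dampedsh} and \eqref{dampedshbis} are exactly $F_2(u)$ and $F_0(u)$, so the two equations are genuinely of the form \eqref{cloosed2} and \eqref{cloosed}.

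Next I would align the observability inequality \eqref{obsshw} with the abstract hypothesis \eqref{obsp0}. Choosing $K=D(A)=H^2(\Omega)\cap H^1_0(\Omega)$ (so that $\|z\|_K\sim\|z\|_{D(A)}$ holds trivially), and noting that $\int_\Omega a\,|\phi|^2\,dx=\langle BS(t)\phi_0,S(t)\phi_0\rangle$ with $\phi=S(t)\phi_0$, the left-hand side of \eqref{obsshw} equals $\|z\|_K^2\exp\!\big(-c_T\|z\|_K/\|z\|\big)$. Writing $x=\|z\|^2/\|z\|_K^2$ turns this into $\|z\|_K^2\,\mathcal{H}(x)$ with $\mathcal{H}(x)=\exp(-c_T/\sqrt{x})$, so \eqref{obsp0} holds with this $\mathcal{H}$ and $\delta=1$. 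Here the auxiliary space $L$ and the exponent $\theta$ play no role, since the second assertions of both theorems rest only on the $K$--$\mathcal{H}$ structure.

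Then I would check the monotonicity requirements. For Theorem~\ref{stabp2}(2) I need $\mathcal{H}$ invertible and $x\mapsto\mathcal{H}^2(x)/x$ increasing on $(0,1)$; the first is clear, and a logarithmic derivative gives $\big(\ln(\mathcal{H}^2/x)\big)'(x)=x^{-3/2}(c_T-\sqrt{x})$, which is positive on $(0,1)$ as soon as $c_T\ge 1$---a harmless requirement since $c_T$ in \eqref{obsshw} may be enlarged. For Theorem~\ref{stabp0}(2) I need $\mathcal{K}(x)=x\mathcal{H}(x)$ invertible; differentiating yields $\mathcal{K}'(x)=\exp(-c_T/\sqrt{x})\,(1+\tfrac{c_T}{2}x^{-1/2})>0$, so $\mathcal{K}$ is strictly increasing.

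Finally, the two decay rates follow by inverting these functions near the origin. Directly, $\mathcal{H}^{-1}(s)=c_T^2/(\ln s)^2$, so Theorem~\ref{stabp2}(2) gives $E(t)=O\big(\mathcal{H}^{-1}(1/t)\big)=O\big(1/(\ln t)^2\big)$, which is \eqref{general-decaybissh} (the shift to $\ln(1+t)$ merely absorbs small $t$). The only genuinely delicate point, and the main obstacle, is the behavior of $\mathcal{K}^{-1}$ as $s\to 0^+$: from $\ln\mathcal{K}(x)=\ln x-c_T x^{-1/2}$ the exponential factor dominates the logarithmic one as $x\to 0^+$, so $\mathcal{K}^{-1}(s)$ inherits the same leading asymptotics $c_T^2/(\ln s)^2$ as $\mathcal{H}^{-1}(s)$; plugging $s=1/t$ into Theorem~\ref{stabp0}(2) then produces $E(t)=O\big(1/(\ln t)^2\big)$, i.e. \eqref{general-decaybisshbis}.
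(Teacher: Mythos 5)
Your proposal is correct and follows exactly the route the paper intends: the paper gives no explicit proof beyond citing Theorems~\ref{stabp0} and~\ref{stabp2} together with the observability inequality \eqref{obsshw} and the identification $\mathcal{H}(x)=\exp(-c_T/\sqrt{x})$, and you have faithfully filled in the details it leaves implicit (verification of hypotheses, the monotonicity of $x\mapsto\mathcal{H}^2(x)/x$ via enlarging $c_T$, which the paper itself remarks on, and the asymptotic inversion of $\mathcal{H}$ and $\mathcal{K}$ yielding the $1/(\ln t)^2$ rate).
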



\begin{thebibliography}{20}

\bibitem{alabau}  F. Alabau, P. Cannarsa and V. Komornik, Indirect internal stabilization of weakly coupled evolution equations, {\em J. Evol. Equ.,} {\bf 2} (2002), 127--150.

\bibitem{FK} F. Alabau-Boussouira and K. Ammari, Sharp energy estimates for nonlinearly locally damped PDEs via observability for the associated undamped system, {\em J. Funct. Anal.,} {\em 260} (2011), 2424--2450.

\bibitem{Kai-Nec} K. Ammari and S. Nicaise, {\em Stabilization of elastic systems by collocated feedback,} 2124, Springer, Cham, 2015.

\bibitem{KM1} K. Ammari and M. Tucsnak, Stabilization of second order evolution equations by a class of unbounded feedbacks, {\em ESAIM Control Optim. Calc. Var.,} {\bf 6} (2001), 361--386.

\bibitem{KM2} K. Ammari and M. Tucsnak, Stabilization of Bernoulli-Euler beams by means of a pointwise feedback force, {\em SIAM J. Control Optim.,} {\bf 39} (2000), 1160--1181.

\bibitem{bal}  J.M. Ball and M. Slemrod, Feedback stabilization of distributed semilinear control systems, {\em Appl. Math. Opt.,}
{\bf 5} (1979), 169--179.

\bibitem{bal2} J.M. Ball, On the asymptotic behaviour of generalized processes, with applications to nonlinear
evolution equations, {\em J. Differential Equations.}, {\bf 27} (1978), 224--265.



\bibitem{bal3} J. M. Ball,  Strongly continuous semigroups, weak solutions, and the variation of constants
formula, Proe. Amer. Math. Soc., 63, (1977), 370-373.


\bibitem{bal4} J. M. Ball and M. Slemrod, Nonharmonic Fourier series and the stabilization of
distributed semilinear control systems, Comm. Pure Appl. Math. 32
(1979), 555-587.


\bibitem{bal82}  Ball, J.  Mardsen, J.E. and  Slemrod, M.  Controllability for distributed bilinear systems,
SIAM J. Contr. Opt., 20 (1982), pp. 575-597.


\bibitem{bal81} Balakrishnan, AV.  Strong stability and the steady state Riccati equation. \textit{Applied Mathematics and Optimization}, 1981, 7 : 335-345.

 \bibitem{ber} L. Berrahmoune, Stabilization and decay estimate for distributed
bilinear systems, {\em Systems and Control Letters.,} {\bf 36} (1999), 167--171.

\bibitem{bou} H. Bounit and H. Hammouri, Feedback stabilization for a class of distributed semilinear control systems, {\em Nonlinear Anal.,}
{\bf 37} (1999), 953--969.


\bibitem{cur78}  Curtain, R.F. and  Pritchard, A.J.  Infinite dimensional linear systems
theory, Springer-Verlag, Lecture Notes in Control and Information
Sciences, New York, (1978).

\bibitem{cur95} Curtain, R. F., and Zwart, H. J.  An Introduction to Infinite Dimensional Linear Systems
Theory. NewYork, Springer-Verlag, 1995.


\bibitem{kha00} Khapalov,  A.Y., $\&$  Mohler, R.R.  Bilinear control and application to
flexible a.c. transmission systems, Journal of Optimization Theory
and Applications, 105, (2000)  621-637.


\bibitem{li01}  Li, Z. G.  Wen, C. Y., $\&$  Soh,  Y. C.  Switched controllers and their applications in bilinear systems, Automatica 37, (2001), 477-481.


\bibitem{math} C. Laurent and M. L\'eautaud, Quantitative unique continuation for operators with partially analytic coefficients. Application to approximate control for waves, {\em J. Eur. Math. Soc. (JEMS),} {\bf 21} (2019), 957--1069.


 \bibitem{moh73}  Mohler, R.R. (1973). Bilinear Control Processe. Academic Press New York.


\bibitem{moh80}  Mohler, R.R., $\&$  Kolodziej, W.J. (1980).  An overview of bilinear system
theory and applications, IEEE Transactions on Systems, Man and
Cybernetics, 10,  683-688.

 \bibitem{sei01} Seidman, T. I. and Li, H. (2001), A note on stabilization with saturating feedback. Discrete Continuous Dyn. Syst.,
7, 319-328.


\bibitem{slem1} Slemrod M.
{\em A note on complete controllability and stability for linear control systems in Hilbert space}.
SIAM. J. Control and optim., 1974, 12: pp 500-508.


\bibitem{sle89} Slemrod, M. (1989). Feedback stabilization of a linear control system in Hilbert space with an a
priori bounded control, Math. Control Signals Systems, 2, 265-285.



\bibitem{trieb} H. Triebel, {\em Interpolation theory, function spaces,} Differential Operators, 1995.

\bibitem{ouz2} M. Ouzahra, Strong stabilization with decay estimate of semilinear systems, {\em Systems and Control Letters,} {\bf 57} (2008), 813--815.

\bibitem{ouz1} M. Ouzahra, Exponential stabilization of distributed semilinear  systems by  optimal control, {\em Journal of Mathematical Analysis and Applications.,} {\bf 380} (2011), 117--123.

\bibitem{ouz-al12} M. Ouzahra, A. Tsouli and A. Boutoulout, Stabilisation and polynomial decay estimate for distributed semilinear systems, {\em International journal of Control,} {\bf 85} (2012), 451--456.

\bibitem{paz} A. Pazy, {\em Semi-groups of linear operators and applications to partial differential equations,} Springer Verlag, New York, 1983.

\bibitem{phunghdr} K.D. Phung, {\em Observation et stabilisation d'ondes :
g\'eom\'etrie et co\^ut du contr\^ole,} Hdr, 2007.




\bibitem{wehbe} A. Wehbe and W. Youssef, Observabilit\'e et contr\^olabilit\'e exacte indirecte interne par un contr\^ole localement distribu\'e de syst\`emes d'\'equations coupl\'ees, {\em C. R. Math. Acad. Sci. Paris.,} {\bf 348} (2010), 1169--1173.

\bibitem{won79} Wonham, W. M.  Linear Multivariable Control : A Geometric Approach. Springer Verlag, (1979).

\bibitem{zab95}  Zabczyk, J.  Mathematical control theory : An Introduction, Birkhauser, Boston, 1995.




\end{thebibliography}
\end{document}